\documentclass[onecolumn, 12pt]{IEEEtran}
\usepackage{amsmath,amsthm,amssymb,amsfonts,enumerate,mathtools}
\usepackage{graphs}


\theoremstyle{plain}
\newtheorem{theorem}{Theorem}[section]
\newtheorem{lemma}[theorem]{Lemma}

\newtheorem{prop}[theorem]{Proposition}
\newtheorem{corollary}[theorem]{Corollary}
\newtheorem{defn}[theorem]{Definition}
\newtheorem{example}[theorem]{Example}  
\newtheorem{remark}[theorem]{Remark}

\newcommand{\mc}[1]{\mathcal{#1}}
\newcommand{\vv}[1]{\mathbf{#1}}

\def\supp{{\operatorname{supp}}}

\newcommand\vlambda{{\boldsymbol{\lambda}}}
\newcommand\vomega{{\boldsymbol{\omega}}}
\newcommand\vepsilon{{\boldsymbol{\epsilon}}}
\newcommand\bbr{\mathbb R}
\newcommand\R{\mathbb R}

\newcommand\Z{\mathbb Z}


\begin{document}

\title{LP Pseudocodewords of Cycle Codes are Half-Integral}

\author{Nathan Axvig\thanks{This work was supported in part by NSF grants DMS-0602332 and DMS-0838463.}\thanks{N. Axvig is with the Department
  of Applied Mathematics, Virginia Military Institute, Lexington, VA 24450, USA.}\thanks{N. Axvig's email:  axvignd10@vmi.edu}
}

\maketitle


\begin{abstract}
In his Ph.D. disseration, Feldman and his collaborators define the linear programming decoder for binary linear codes, which is a linear programming relaxation of the maximum-likelihood decoding problem.  This decoder does not, in general, attain maximum-likelihood performance; however, the source of this discrepancy is known to be the presence of non-integral extreme points (vertices) within the fundamental polytope, vectors which are also called nontrivial linear programming pseudocodewords.  Restricting to the class of cycle codes, we provide necessary conditions for a vector to be a linear programming pseudocodeword.  In particular, the components of any such pseudocodeword can only assume values of zero, one-half, or one.
\end{abstract}


\maketitle

\section{Introduction}\label{sec:introduction}

Let $C$ be a binary linear code of length $n$, and consider transmission over a binary-input, memoryless channel.  Under the the additional assumptions that the channel is symmetric and that each codeword is equally likely to be transmitted, a \emph{maximum-likelihood decoder} returns a codeword that minimizes the probability of word-error.  It is well-known (see, e.g.,~\cite{axvig10, Fel00, klvw2}) that the set of maximum-likelihood (ML) codewords coincides with the solution set of the following integer program:
\begin{equation}\label{eqn:integerprogram}
\begin{array}{rll}
\text{minimize} & \vlambda^T \vv c  \\
\text{subject to} & \vv c \in C
\end{array}
\end{equation}
where $\vlambda$ is the vector of log-likelihood ratios based on the channel output.  While solutions to integer linear programs such as Problem~(\ref{eqn:integerprogram}) are often of great interest, solving integer linear programs in general is quite difficult.  On the other hand, solutions to linear programs (problems in which the output vector is not constrained to be an integer vector) can often be found quickly (using, e.g., the simplex algorithm), and they can even be found in polynomial time (using, e.g., the ellipsoid method)~\cite{bt}.  A common approach to solving integer linear programs therefore is to solve a related linear program (or a series of linear programs) and then interpret the solution in light of the original problem.

Letting $\text{conv}(C)$ be the convex hull of $C$ in $\R^n$ (where we regard codewords as vectors in $\{0,1\}^n \subseteq \R^n$), Problem~(\ref{eqn:integerprogram}) is equivalent to
\begin{equation}\label{eqn:MLproblem}
\begin{array}{rll}
\text{minimize} & \vlambda^T \vv f  \\
\text{subject to} & \vv f \in \text{conv}(C).
\end{array}
\end{equation}
Problem~(\ref{eqn:MLproblem}) is a linear program; however, the constraints needed to define the feasible region $\text{conv}(C)$ explicitly are likely to be either (a) difficult to compute or (b) very large in number.  Elaborating on this, suppose that there was a polynomial time algorithm that was capable of taking a general binary linear code $C$ and producing a set of constraints describing $\text{conv}(C)$.  This algorithm, along with the ellipsoid method, can then be used to solve instances of the COSET WEIGHTS decision problem (see~\cite{berlekamp} for a definition) in polynomial time.  Since COSET WEIGHTS is NP-complete~\cite{berlekamp}, this would imply P = NP.

%

In an attempt to find solutions to Problem~(\ref{eqn:MLproblem}) efficiently, Feldman~\cite{Fel00} defines the \emph{linear programming (LP) decoder}.  The linear programming decoder returns as its output the solution to a \emph{linear programming relaxation} of Problem~(\ref{eqn:MLproblem}), namely
\begin{equation}\label{eqn:LPproblem}
\begin{array}{rll}
\text{minimize} & \vlambda^T \vv f  \\
\text{subject to} & \vv f \in \mc P.
\end{array}
\end{equation}
The feasible set $\mc P$ of this linear program is known as the \emph{fundamental polytope}.
The fundamental polytope has several properties that make it a reasonable choice for a relaxation of Problem~(\ref{eqn:MLproblem}), not the least of which is that, from a computational complexity standpoint, it is easier to describe than $\text{conv}(C)$.  More specifically, if we assume that the row-weight of the parity-check matrix defining the code is bounded by a constant, then the number of constraints that describe $\mc P$ is bounded by a polynomial in $n$, where $n$ is the length of the code (see Section~\ref{sec:background_fundamental_polytope}).

The fundamental polytope is a subset of the unit hypercube $[0,1]^n$ and thus is bounded. Because $\mc P$ is bounded, the general theory of linear programming states that (a) a solution to Problem~(\ref{eqn:LPproblem}) always exists and that (b) we may assume that at least one solution is an extreme point\footnote{Extreme points are often referred to as vertices in linear programming literature.  We refrain from applying the term ``vertex'' to polyhedra to avoid confusion when speaking of both graphs and polyhedra simultaneously.  Thus, the terms ``node'' and ``vertex'' are reserved for graphs, and ``extreme point'' is reserved for polyhedra.} of the underlying polytope~\cite{bt}.  We therefore make the convention that the LP decoder always returns an extreme point of the fundamental polytope.  As such, we say that any extreme point of $\mc P$ is a \emph{linear programming (LP) pseudocodeword}.  Linear programming pseudocodewords are the principal objects of investigation in this paper.

In~\cite{Fel00}, it shown that the set of integer extreme points of $\mc P$ is precisely the set of codewords $C$.  Such integer vectors are therefore called \emph{trivial linear programming (LP) pseudocodewords}.  Feldman~\cite{Fel00} uses this relationship between $C$ and $\mc P$ to show that the LP decoder has the \emph{ML-certificate property}:  if an optimal solution $\vv f^\ast$ to Problem~(\ref{eqn:LPproblem}) is a integer vector, then $\vv f^\ast$ must be an ML codeword.  It is often the case, however, that the fundamental polytope contains non-integer extreme points, known as \emph{nontrivial linear programming (LP) pseudocodewords}, in addition to the integer-valued codewords.  By the ML-certificate property, it is precisely the presence of nontrivial LP pseudocodewords that prevents the LP decoder from attaining the performance (with respect to word-error rate) of the ML decoder.

In this paper, we provide new necessary conditions for a vector to be a linear programming pseudocodeword of a \emph{cycle code}.  In particular, we show that any LP pseudocodeword of a cycle code must be half-integral; that is, any LP pseudocodeword must be an element of $\{0,\frac 12, 1\}^n$.  Moreover, in proving this half-integrality we also discover that LP pseudocodewords of cycle codes display additional structure that can be stated succinctly in terms of the code's \emph{Tanner graph}.  The results of this paper are applied in~\cite{jointpaper2010}, where the author and his collaborator provide complete graphical characterizations of both LP pseudocodewords and \emph{minimal linear programming pseudocodewords} (see~\cite{VonKoe06}) for the class of cycle codes.

The remainder of this paper is organized as follows.  Section~\ref{sec:background}, the background section, is broken into three parts.  In Section~\ref{sec:background_coding}, we introduce relevant coding theory terminology and the family of cycle codes.  Section~\ref{sec:background_extreme_points} reviews some basic definitions and facts about extreme points, and Section~\ref{sec:background_fundamental_polytope} gives an explicit description of the fundamental polytope.  Section~\ref{sec:halfintegral} makes up the mathematical bulk of this paper, and it is divided into two parts.    Section~\ref{sec:technicallemma} is dedicated to proving a technical result that is then used in Section~\ref{sec:proof} to prove Theorem~\ref{thm:halfintegral}, which is the main result of this work.  We make concluding remarks in Section~\ref{sec:conclusion}.

\section{Background}\label{sec:background}

\subsection{Background on Coding Theory}\label{sec:background_coding}

The results herein require knowledge not only of a code itself but also of the specific parity-check presenting the code.  By ``code,'' we therefore mean a binary linear code equipped with a fixed parity-check matrix.\footnote{This is a departure in terminology from many standard texts and publications in coding theory where a code is described intrinsically as a subspace, not extrinsically as the kernel of a specific matrix.}

\begin{defn}\label{defn:Tannergraph}
A \emph{Tanner graph} is a bipartite graph $G$ with vertex set $X \cup U$ and edge set $E$.  The elements of $X$ are called \emph{variable nodes}, and the elements of $U$ are called \emph{check nodes}.
\end{defn}

Given an $r \times n$ parity-check matrix $H = (h_{j,i})$, one can construct the Tanner graph $G = G(H)$ of $H$ as follows:  set $X = \{x_1, x_2, \dots, x_n\}$ and  $U = \{ u_1, u_2, \dots u_r\}$. Define $G = G(H)$ to be the graph whose vertex set is $X \cup U$ with $x_i$ adjacent to $u_j$ if and only if $h_{j,i} = 1$.  In other words, $G(H)$ is the is the bipartite graph whose bipartite adjacency matrix is $H$.  Conversely, one can derive a parity-check matrix from a Tanner graph:  index rows by vertices in $U$ and columns by vertices in $X$.  Set $h_{u,x} = 1$ precisely when $x$ and $u$ are adjacent in $G$.  In this manner one can see that there is a bijective correspondence between Tanner graphs and parity-check matrices.

The Tanner graph is the structure on which the class of \emph{message-passing decoding algorithms}, e.g., the \emph{min-sum algorithm} or the \emph{sum-product algorithm}, operate.  Loosely speaking, message-passing decoding algorithms work by relaying bit and/or reliability information across edges of the Tanner graph in an iterative fashion.  For the purposes of this paper, however, the Tanner graph is introduced to provide a graphical environment in which to view codewords.

\begin{defn}\label{defn:validconfiguration}
Let $G = (X \cup U, E)$ be a Tanner graph, and let $\vv f$ be an assignment of binary values to the variable nodes of $G$.  The assignment $\vv f$ is called a \emph{valid configuration} on $G$ provided that for each $u \in U$ the quantity $\sum_{x \in N(u)} x_i$ is even, where, as is standard, $N(u)$ denotes the neighborhood of $u$ in $G$, i.e., the set of all $x \in X$ that are adjacent to $u$.
\end{defn}

By identifying valid configurations on a Tanner graph with vectors in $\{0,1\}^n$, we obtain a natural correspondence between valid configurations and codewords:

\begin{prop}[see, e.g.,~\cite{AMPTPW3}]\label{prop:valid}
Let $C$ be a code with parity-check matrix $H$, and let $G = G(H) = ( X \cup U, E)$ be the corresponding Tanner graph.  The set of valid configurations on $G$ corresponds precisely to the set $C$ of codewords.
\end{prop}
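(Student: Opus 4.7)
My plan is to prove the proposition by a direct, row-by-row translation of the defining matrix equation $H\vv c = \vv 0$ (taken over $\F_2$) into the combinatorial condition defining a valid configuration. First I would set up the correspondence: identify a variable-node assignment with the vector $\vv f = (f_1, \dots, f_n) \in \{0,1\}^n$ whose $i$-th coordinate is the value placed on $x_i$. Under this identification, asking whether $\vv f$ corresponds to a codeword is asking whether $H\vv f = \vv 0$ in $\F_2^r$.

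Next I would expand the $j$-th row of this system. By the construction of $G = G(H)$ recalled just before the statement, $h_{j,i} = 1$ if and only if the variable node $x_i$ is adjacent to the check node $u_j$, i.e., $x_i \in N(u_j)$. Hence the $j$-th coordinate of $H\vv f$ equals, in $\F_2$,
\begin{equation*}
\sum_{i=1}^{n} h_{j,i}\, f_i \;=\; \sum_{x_i \in N(u_j)} f_i.
\end{equation*}
Since addition in $\F_2$ is reduction modulo $2$, this coordinate is zero in $\F_2$ precisely when the integer sum $\sum_{x_i \in N(u_j)} f_i$ is even. Taking this equivalence simultaneously across all $j \in \{1, \dots, r\}$ shows that $H\vv f = \vv 0$ if and only if every check node $u_j$ satisfies the evenness condition of Definition~\ref{defn:validconfiguration}.

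The only real care that is needed is bookkeeping: one must not conflate the integer sum appearing in the definition of a valid configuration with the $\F_2$-sum appearing in the matrix equation. Once the observation that the two conditions are equivalent via reduction mod $2$ is made, both implications of the biconditional follow at once, and no further combinatorial argument is required. I expect no genuine technical obstacle here — the statement is essentially a reformulation of the definition of $G(H)$, and the proof amounts to transcribing $H\vv f = \vv 0$ into graph-theoretic language.
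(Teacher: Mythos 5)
Your proof is correct: the row-by-row translation of $H\vv f = \vv 0$ over $\F_2$ into the evenness condition at each check node is exactly the standard argument, and your care in distinguishing the integer sum from the $\F_2$-sum is the only point of substance. The paper itself offers no proof of this proposition --- it is stated with a citation to an external reference --- so there is nothing to compare against, but your argument is the one any such reference would give and it is complete as written.
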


By Proposition~\ref{prop:valid}, codewords of $C$ may be viewed as valid binary configurations on a Tanner graph.  This graphical realization of codewords is further displayed in the family of \emph{cycle codes}.

\begin{defn}\label{defn:cyclecode}
A \emph{cycle code} is a code $C$ equipped with a parity-check matrix $H$ that has uniform column weight 2.
\end{defn}

We focus on cycle codes because they provide for a more fruitful analysis than general codes.  This is due to their special structure:  let $C$ be a cycle code with Tanner graph $G$.  If $\vv c \in C$, Proposition~\ref{prop:valid} states $\vv c$ must be a valid configuration on $G$.  Since every variable node of $G$ has degree two, the subgraph of $G$ induced by $\supp(\vv c) \cup N(\supp(\vv c))$ is even, i.e., all nodes in the subgraph have even degree (herein, $\supp(\vv v)$ is used to denote the \emph{support} of $\vv v$:  the set of positions where $\vv v$ is not zero).  Thus, the subgraph induced by $\supp(\vv c) \cup N(\supp(\vv c))$ must be a union of edge-disjoint simple cycles.  The converse is also true:  the indicator vector for a set of variable nodes within a union of edge-disjoint simple cycles is a codeword.

\begin{example}\label{ex:cyclecodeword}
The parity-check matrix
\[
H = \begin{bmatrix}
1 & 1 & 0 & 1 & 0 & 0 & 0 & 0 & 0 \\
0 & 1 & 1 & 0 & 1 & 0 & 0 & 0 & 0 \\
1 & 0 & 1 & 0 & 0 & 1 & 0 & 0  & 0\\
0 & 0 & 0 & 1 & 0 & 0 & 1 & 0 & 1 \\
0 & 0 & 0 & 0 & 1 & 0 & 1 & 1 & 0\\
0 & 0 & 0 & 0 & 0 & 1 & 0 & 1 & 1 \\
\end{bmatrix}
\]
has uniform column weight two, and hence defines a cycle code.  One can check that the vector
\[
\vv c = (1,1,0,0,1,1,0,1,0)^T
\]
is in the null-space of $H$.  It also gives rise to a simple cycle in the Tanner graph of the code; see Figure~\ref{fig:cyclecodeword} for an illustration.  We note that an isomorphic Tanner graph appears in Figure 6.1 of~\cite{Wib96}.
\end{example}

\begin{figure}
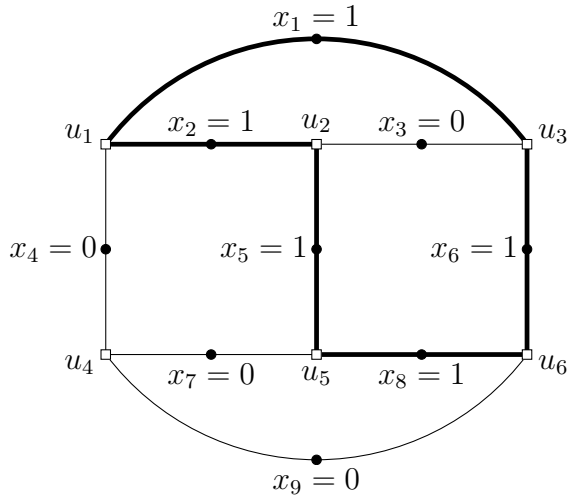

\begin{center}
$
\begin{graph}(8,6.5)
\unitlength=.7cm

\squarenode{f1}(2,7)[\graphnodecolour(1,1,1)]
\squarenode{f2}(6,7)[\graphnodecolour(1,1,1)]
\squarenode{f3}(10,7)[\graphnodecolour(1,1,1)]
\squarenode{f4}(2,3)[\graphnodecolour(1,1,1)]
\squarenode{f5}(6,3)[\graphnodecolour(1,1,1)]
\squarenode{f6}(10,3)[\graphnodecolour(1,1,1)]


\roundnode{x1}(6,9)
\roundnode{x2}(4,7)
\roundnode{x3}(8,7)
\roundnode{x4}(2,5)
\roundnode{x5}(6,5)
\roundnode{x6}(10,5)
\roundnode{x7}(4,3)
\roundnode{x8}(8,3)
\roundnode{x9}(6,1)

\bow{f1}{f3}{.25}[\graphlinewidth{0.09}]
\edge{f1}{f2}[\graphlinewidth{0.09}]
\edge{f2}{f3}
\edge{f1}{f4}
\edge{f2}{f5}[\graphlinewidth{0.09}]
\edge{f3}{f6}[\graphlinewidth{0.09}]
\edge{f4}{f5}
\edge{f5}{f6}[\graphlinewidth{0.09}]
\bow{f4}{f6}{-.25}

\nodetext{x1}(0,.4){$x_1 = 1$}
\nodetext{x2}(0,.4){$x_2 = 1$}
\nodetext{x3}(0,.4){$x_3 = 0$}
\nodetext{x4}(-1,0){$x_4 = 0$}
\nodetext{x5}(-1,0){$x_5 = 1$}
\nodetext{x6}(-1,0){$x_6 = 1$}
\nodetext{x7}(0,-.4){$x_7 = 0$}
\nodetext{x8}(0,-.4){$x_8 = 1$}
\nodetext{x9}(0,-.4){$x_9 = 0$}

\nodetext{f1}(-.5,.2){$u_1$}
\nodetext{f2}(0,0.4){$u_2$}
\nodetext{f3}(.5,.2){$u_3$}
\nodetext{f4}(-.5,-.2){$u_4$}
\nodetext{f5}(0,-.4){$u_5$}
\nodetext{f6}(.5,-.2){$u_6$}

\end{graph}
$
\hspace{1cm}
\end{center}
\caption{The configuration corresponding to the codeword $\vv c = (1,1,0,0,1,1,0,1,0)^T$ of Example~\ref{ex:cyclecodeword}.}
\label{fig:cyclecodeword}
\end{figure}

\subsection{Background on Linear Programming and Extreme Points}\label{sec:background_extreme_points}

Most of the material in this section can be found in~\cite{bt}.  For our purposes, we define a \emph{linear program} to be an optimization problem of the form
\begin{equation*}
\begin{array}{rll}
\text{minimize} & \vv q^T \vv f &   \\
\text{subject to} &  \vv a_j^T \vv f \geq  b_j & j \in M_1 \\
 &  \vv a_j^T \vv f \leq  b_j & j \in M_2 \\
  &  \vv a_j^T \vv f = b_j & j \in M_3 \\
& f_i \geq 0 &  i \in N_1 \\
& f_i \leq 0 &  i \in N_2,
\end{array}
\end{equation*}
where $\vv q$, $\vv f$, and the $\vv a_j$'s are $n$-dimensional real vectors and the index sets $M_1, M_2, M_3, N_1,$ and $N_2$ are finite.\footnote{In general, a linear program can be either a maximization or a minimization problem.  There is, however, no essential loss of generality in using this convention:  maximizing $\vv q^T \vv f$ is equivalent to minimizing $(-\vv q)^T \vv f$.}  The vector $\vv q$ is called the \emph{cost function} of the linear program.  The linear functional $\vv q^T \vv f$ is the \emph{objective function} or the \emph{cost function}.  The vectors $\vv a_j$ are called \emph{constraint vectors}, or simply \emph{constraints}.  Also called constraints are the (in)equalities of the form $\vv a_j^T \vv f \geq  b_j, \vv a_j^T \vv f \leq  b_j,$ or $\vv a_j^T \vv f =  b_j$.

Any vector $\vv f$ that satisfies all of the linear program's constraints is called a \emph{feasible solution}, and the set of all feasible solutions is the \emph{feasible set}.  A feasible solution that minimizes the objective function is an \emph{optimal feasible solution}, or simply an \emph{optimal solution}.  Note that while the optimality of a vector $\vv f$ depends on both the constraints and the cost function, the feasibility of a solution $\vv f$ depends only on the constraints.  Since the feasible set for a linear program is an intersection of closed half-spaces in $\bbr^n$, it is a \emph{polyhedron}.  A polyhedron that is also bounded is called a \emph{polytope}.  When dealing with polyhedra in the context of linear programming, particular attention is paid to the \emph{extreme points}.

\begin{defn}[\cite{bt}, Definition 2.6]\label{defn:extremepoint}
Let $\mc M$ be a nonempty polyhedron in $\R^n$.  A point $\vomega \in \mc M$ is an \emph{extreme point} provided that it is not in the convex hull of $\mc M \setminus \{\vomega\}$.  In other words, $\vomega$ is an extreme point of $\mc M$ if it cannot be written as a convex sum $\alpha \vv y + (1-\alpha) \vv z$ for $\vv y, \vv z \in \mc M \setminus \{ \vomega\}$ and $\alpha \in (0,1)$.
\end{defn}

The importance of extreme points is summarized in the next two theorems.

\begin{theorem}[\cite{bt}, Theorem 2.7]\label{thm:lpoptimal}
Suppose that a linear program has feasible set $\mc M$.  If $\mc M$ has an extreme point and an optimal solution to the linear program exists, then there exists an optimal solution that is an extreme point of $\mc M$.
\end{theorem}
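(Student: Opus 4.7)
The plan is to reduce the claim about a general optimal solution to a claim about the polyhedron of optimal solutions. Specifically, I would let $v^\ast = \min_{\vv f \in \mc M} \vv q^T \vv f$ denote the optimal value (which exists by hypothesis) and introduce the optimal face
\[
P \;=\; \{\,\vv f \in \mc M : \vv q^T \vv f = v^\ast\,\}.
\]
Then $P$ is a nonempty polyhedron, since it arises as the intersection of $\mc M$ with a hyperplane. The strategy is to show first that $P$ itself has an extreme point, and then that every extreme point of $P$ is automatically an extreme point of $\mc M$.

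For the first step, I would invoke the standard characterization (proved earlier in \cite{bt}): a nonempty polyhedron possesses an extreme point if and only if it contains no line. Because $\mc M$ has an extreme point by hypothesis, $\mc M$ contains no line. Since $P \subseteq \mc M$, $P$ also contains no line, so $P$ has at least one extreme point $\vomega$.

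For the second step, I would argue by contradiction. Suppose $\vomega$ is an extreme point of $P$ but not of $\mc M$, so that $\vomega = \alpha \vv y + (1-\alpha)\vv z$ for some distinct $\vv y, \vv z \in \mc M \setminus \{\vomega\}$ and $\alpha \in (0,1)$. Then
\[
v^\ast \;=\; \vv q^T \vomega \;=\; \alpha \,\vv q^T \vv y \;+\; (1-\alpha)\,\vv q^T \vv z,
\]
while optimality of $v^\ast$ on $\mc M$ forces $\vv q^T \vv y \ge v^\ast$ and $\vv q^T \vv z \ge v^\ast$. A convex combination of two numbers each at least $v^\ast$ equals $v^\ast$ only when both equal $v^\ast$, so $\vv y, \vv z \in P$. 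This contradicts $\vomega$ being an extreme point of $P$, completing the argument.

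The main obstacle is really the first step: the existence of an extreme point for a line-free polyhedron is not trivial, but here it is available as a prior result from \cite{bt}, so I would simply cite it. The rest of the proof is then a clean two-line convexity argument.
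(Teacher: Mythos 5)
Your proof is correct: the reduction to the optimal face $P = \{\vv f \in \mc M : \vv q^T \vv f = v^\ast\}$, the line-freeness argument for the existence of an extreme point of $P$, and the convexity argument showing extreme points of $P$ are extreme points of $\mc M$ are all sound. The paper itself gives no proof of this statement---it imports it directly from \cite{bt}---and your argument is essentially the standard one found in that reference, so there is nothing to reconcile.
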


Theorem~\ref{thm:lpoptimal} implies that we may assume that an optimal solution to a linear program occurs at an extreme point.  In particular, when solving a linear program whose underlying polyhedron is bounded, an optimal solution is guaranteed to exist and it therefore suffices to consider only extreme points as candidate solutions. On the other hand, given an extreme point $\vv e$ of a polyhedron $\mc M$ there is always some linear program with $\mc M$ as its feasible set whose unique solution is $\vv e$:

\begin{theorem}[\cite{bt}, Theorem 2.3]\label{thm:extremepointsolution}
Let $\mc M$ be a polyhedron in $\R^n$.  If $\vomega$ is an extreme point of $\mc M$, then there exists a cost function $\vv q \in \bbr^n$ such that $\vomega$ is the unique optimal solution to the following linear program:
\[\begin{array}{rl}
\text{minimize} & \vv q^T \vv f    \\
\text{subject to} &  \vv f \in \mc M.
\end{array}\]
In other words, $\vv q^T \vomega < \vv q^T \vv f$ for all $\vv f \in \mc M \setminus \{ \vomega \}$.
\end{theorem}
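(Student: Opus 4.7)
The plan is to exploit the characterization of extreme points as basic feasible solutions developed in~\cite{bt}: a point $\vomega \in \mc M$ is an extreme point if and only if there exist $n$ linearly independent constraints of the linear program describing $\mc M$ that are active (i.e., hold with equality) at $\vomega$. Assuming this characterization, the construction of a suitable cost function $\vv q$ reduces to a short linear-algebraic argument.

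First I would select $n$ linearly independent constraints that are active at $\vomega$, say with coefficient vectors $\vv a_{j_1}, \ldots, \vv a_{j_n}$ and right-hand sides $b_{j_1}, \ldots, b_{j_n}$. I would orient each one so that every $\vv f \in \mc M$ satisfies $\vv a_{j_k}^T \vv f \geq b_{j_k}$: $\leq$ constraints get negated, equality constraints can be placed in either direction, and sign constraints of the form $f_i \geq 0$ or $f_i \leq 0$ are already, or are made, of this type. With this convention I set
\[
\vv q \;:=\; \sum_{k=1}^n \vv a_{j_k}, \qquad b \;:=\; \sum_{k=1}^n b_{j_k}.
\]

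Next I would verify optimality. For any $\vv f \in \mc M$, summing the $n$ active-constraint inequalities yields $\vv q^T \vv f \geq b$, and because each of the chosen constraints holds with equality at $\vomega$, we also have $\vv q^T \vomega = b$. Hence $\vomega$ minimizes $\vv q^T \vv f$ over $\mc M$. For uniqueness, suppose $\vv f^\ast \in \mc M$ satisfies $\vv q^T \vv f^\ast = b$. Since each summand $\vv a_{j_k}^T \vv f^\ast$ is at least $b_{j_k}$ and the sum of these quantities is exactly $\sum_k b_{j_k}$, each individual inequality must be an equality, giving the square linear system $\vv a_{j_k}^T \vv f^\ast = b_{j_k}$ for $k = 1, \ldots, n$. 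Linear independence of the $\vv a_{j_k}$'s makes the coefficient matrix invertible, so the system has a unique solution; since $\vomega$ is one solution, we conclude $\vv f^\ast = \vomega$.

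The main obstacle is justifying the basic-feasible-solution characterization that drives the argument, namely the existence of $n$ linearly independent active constraints at any extreme point. The standard argument observes that if fewer than $n$ linearly independent constraints were active at $\vomega$, the null space of the active-constraint matrix would be nontrivial, furnishing a direction $\vv d \neq \vv 0$ along which both $\vomega + \eps \vv d$ and $\vomega - \eps \vv d$ remain in $\mc M$ for sufficiently small $\eps > 0$; writing $\vomega = \tfrac{1}{2}(\vomega + \eps \vv d) + \tfrac{1}{2}(\vomega - \eps \vv d)$ would then contradict the extremality of $\vomega$ via Definition~\ref{defn:extremepoint}. As this equivalence is established in~\cite{bt} prior to the cited theorem, it may be invoked as a black box, and the short argument above finishes the proof.
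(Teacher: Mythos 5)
Your argument is correct. Note that the paper itself supplies no proof of this statement -- it is imported wholesale from~\cite{bt} (Theorem 2.3), so there is nothing in the text to compare against; what you have written is precisely the standard ``basic feasible solution implies vertex'' step from that reference (take $\vv q$ to be the sum of the active constraint vectors, oriented as $\geq$ constraints, and use linear independence to force uniqueness of the point where all of them are tight), combined with the extreme-point-to-BFS direction that the paper also cites as Theorem~\ref{thm:bfs}. The details all check out: negating $\leq$ constraints preserves linear independence, equality constraints may be relaxed to $\geq$ without harm, and equality in the summed inequality $\vv q^T \vv f \geq b$ forces each of the $n$ nonnegative slacks to vanish, pinning $\vv f$ down as the unique solution of an invertible square system whose solution is $\vomega$.
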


It is often convenient to represent extreme points algebraically instead of geometrically.  We say that a constraint of the form $\vv a^T \vv f \geq  b, \vv a^T \vv f \leq  b$, or $\vv a^T \vv f =  b$ is \emph{active} at $\vomega$ if $\vv a^T \vomega = b$, i.e., if the constraint is met with equality.  If the polyhedron $\mc M$ is defined by linear equality and inequality constraints and $\vomega$ is an element of $\R^n$, then we say that $\vomega$ is a \emph{basic solution} if all equality constraints of $\mc M$ are satisfied and the set of active constraint vectors spans all of $\bbr^n$.  If $\vomega$ is a basic solution that satisfies all of the constraints, it is called a \emph{basic feasible solution}.  Using these algebraic notions, Theorem~\ref{thm:bfs} gives an alternate characterization of extreme points.

\begin{theorem}[\cite{bt}, Theorem 2.3]\label{thm:bfs}
Let $\mc M$ be a nonempty polyhedron in $\R^n$ defined by a given set of linear equality and inequality constraints, and let $\vomega \in \mc M$.  The following are equivalent:
\begin{enumerate}[(a)]
\item  $\vomega$ is an extreme point of $\mc M$,
\item $\vomega$ is a basic feasible solution of $\mc M$.
\end{enumerate}

\end{theorem}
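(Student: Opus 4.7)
The plan is to prove the two implications separately, since this is the standard textbook characterization and both directions admit short perturbation/uniqueness arguments. In both cases the lever is the same: at a point $\vomega \in \mc M$, the active constraints locally control which directions one is allowed to move in without leaving $\mc M$.

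For $(a) \Rightarrow (b)$, I would argue by contrapositive. Assume $\vomega$ fails to be a basic feasible solution. Since $\vomega \in \mc M$ already satisfies every equality constraint, the only way $\vomega$ is not a basic solution is that the set of constraint vectors $\vv a_j$ active at $\vomega$ does not span $\bbr^n$. Choose a nonzero $\vv d \in \bbr^n$ orthogonal to every active $\vv a_j$. I then claim that $\vomega \pm \epsilon \vv d \in \mc M$ for all sufficiently small $\epsilon > 0$: equality constraints continue to hold because their vectors are active and orthogonal to $\vv d$; active inequality constraints continue to hold with equality for the same reason; and each inactive inequality has strict slack at $\vomega$, so a small enough $\epsilon$ preserves it. Because the collection of constraints is finite, a single $\epsilon > 0$ works for all of them simultaneously. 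Writing $\vomega = \tfrac{1}{2}(\vomega + \epsilon \vv d) + \tfrac{1}{2}(\vomega - \epsilon \vv d)$ with both summands in $\mc M \setminus \{\vomega\}$ then contradicts Definition~\ref{defn:extremepoint}.

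For $(b) \Rightarrow (a)$, suppose $\vomega$ is a basic feasible solution and that $\vomega = \alpha \vv y + (1 - \alpha)\vv z$ for some $\vv y, \vv z \in \mc M$ and $\alpha \in (0,1)$. For each constraint $\vv a_j^T \vomega = b_j$ active at $\vomega$, feasibility of $\vv y$ and $\vv z$ places $\vv a_j^T \vv y$ and $\vv a_j^T \vv z$ on the appropriate side of $b_j$, and since their convex combination equals $b_j$ both must themselves equal $b_j$. Thus $\vv y$ and $\vv z$ satisfy the entire active system $\vv a_j^T \vv x = b_j$. By the basic-solution hypothesis this system has full column rank, hence a unique solution, forcing $\vv y = \vv z = \vomega$. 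This shows $\vomega$ cannot be written nontrivially as a convex combination of other points of $\mc M$, i.e., $\vomega$ is an extreme point.

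The only step that demands real care is the perturbation construction in the forward direction, where one must verify simultaneously that the equality and active inequality constraints are preserved (handled by orthogonality) and that every inactive inequality still holds after perturbation (handled by choosing $\epsilon$ smaller than the slack, which is possible because finitely many slacks are all strictly positive). Once that is set up, the extreme-point violation is immediate, and the reverse direction is essentially a linear-algebra uniqueness argument, so I do not anticipate further obstacles.
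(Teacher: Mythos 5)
Your proof is correct. The paper itself gives no proof of this statement---it is imported verbatim from Bertsimas and Tsitsiklis \cite{bt}---and your argument (perturbation along a direction orthogonal to the active constraints for the contrapositive of $(a)\Rightarrow(b)$, and uniqueness of the solution to the full-rank active system for $(b)\Rightarrow(a)$) is precisely the standard argument found in that reference, carried out correctly with respect to the paper's Definition~\ref{defn:extremepoint} and its notion of basic feasible solution.
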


Theorem~\ref{thm:bfs} relates a geometric invariant to the specific algebraic representation of a polyhedron.  It also points to a method of finding extreme points:  suppose that $\vv a_1, \dots, \vv a_n$ is a set of $n$ linearly independent constraint vectors, and let $b_1, \dots, b_n$ be the associated constraint values.  Let $A$ be the matrix whose rows are $\vv a_1, \dots, \vv a_n$, and let $\vv b$ be a column vector whose entries are $b_1, \dots, b_n$.  Since the rows of $A$ are linearly independent, there is a solution to $A \vv f = \vv b$, namely $\vv f = A^{-1} \vv b$.  By the construction of $\vv f$, it satisfies the constraints $\vv a_1, \dots, \vv a_n$ with equality.  If $\vv f$ turns out to satisfy the rest of the constraints of the polyhedron, then $\vv f$ is a basic feasible solution and hence, by Theorem~\ref{thm:bfs}, an extreme point.

Conversely, if $\vv f$ is an extreme point of a polyhedron, then Theorem~\ref{thm:bfs} implies that there exists a set of $n$ linearly independent constraint vectors that are active at $\vv f$; call them $\vv a_1, \dots, \vv a_n$.  With $A$ and $\vv b$ as in the preceding paragraph, the extreme point $\vv f$ must be the unique solution to $A \vv f = \vv b$.  This interpretation of an extreme point as a solution to a linear system of equations gives the proof of the following well-known theorem.

\begin{theorem}\label{thm:rational}
Let $\mc M$ be a non-empty polyhedron in $\R^n$.  If every constraint vector defining $\mc M$ has rational coefficients, then every extreme point of $\mc M$ is a rational vector.
\end{theorem}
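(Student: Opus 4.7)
The plan is to reduce the statement to a basic linear-algebra fact via the algebraic characterization of extreme points given in Theorem~\ref{thm:bfs}. Let $\vomega$ be an extreme point of $\mc M$. By Theorem~\ref{thm:bfs}, $\vomega$ is a basic feasible solution, so (as discussed immediately before the theorem statement) there exist $n$ linearly independent constraint vectors $\vv a_1, \dots, \vv a_n$ that are active at $\vomega$, with corresponding constraint values $b_1, \dots, b_n$. First I would assemble these into the $n \times n$ matrix $A$ whose rows are $\vv a_1^T, \dots, \vv a_n^T$ and the column vector $\vv b = (b_1, \dots, b_n)^T$, so that $\vomega$ is the unique solution to the nonsingular system $A\vv f = \vv b$, i.e., $\vomega = A^{-1}\vv b$.

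Next I would invoke the hypothesis that the constraint data are rational: each $\vv a_j$ has rational entries, and each $b_j$ is rational (the standard interpretation being that the full constraint $\vv a_j^T \vv f \,\square\, b_j$ has rational coefficients), so $A \in \mathbb{Q}^{n \times n}$ and $\vv b \in \mathbb{Q}^n$. A nonsingular rational matrix has a rational inverse; one way to see this is via the cofactor formula $A^{-1} = \frac{1}{\det A}\,\text{adj}(A)$, in which $\det A \in \mathbb{Q} \setminus \{0\}$ and every entry of $\text{adj}(A)$ is a polynomial (with integer coefficients) in the entries of $A$, hence rational. Consequently $\vomega = A^{-1} \vv b \in \mathbb{Q}^n$, which is the desired conclusion.

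There is essentially no obstacle in this proof; the theorems already developed do all the work. The only subtle point is making sure that the $n$ active constraints used to form $A$ really can be chosen to be linearly independent, but this is precisely what Theorem~\ref{thm:bfs} supplies through the definition of a basic solution. I would therefore expect the write-up to be only a few lines, essentially naming the independent active constraints, writing $\vomega = A^{-1}\vv b$, and citing rationality of the inverse.
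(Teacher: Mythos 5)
Your proposal is correct and matches the paper's own argument, which is exactly the discussion preceding the theorem: by Theorem~\ref{thm:bfs} an extreme point is the unique solution of a nonsingular system $A\vv f = \vv b$ built from $n$ linearly independent active constraints, and rationality of $A$ and $\vv b$ forces the solution to be rational. The paper leaves the final step (rationality of $A^{-1}\vv b$) implicit, so your cofactor-formula justification only makes explicit what the paper takes for granted.
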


\subsection{The Fundamental Polytope}\label{sec:background_fundamental_polytope}

We now explicitly describe the fundamental polytope, which is the feasible region over which Feldman's linear programming decoder operates (see Problem~(\ref{eqn:LPproblem})).

\begin{defn}[\cite{Fel00}]\label{defn:fundamentalpolytope}
Let $C$ be a code with parity-check matrix $H$ and Tanner graph $G= (X \cup U, E)$.  The \emph{fundamental polytope} $\mc P = \mc P(H)$ is the set of all vectors $\vv f \in \bbr^{|X|}$ satisfying the following constraints:
\begin{enumerate}[(a)]
\item $0 \leq f_x \leq 1$ for all $x \in X$, and
\vspace{.1cm}
\item $\displaystyle \sum_{x \in S} f_x + \sum_{x \in N(u)\setminus S}(1- f_x) \leq |N(u)| - 1$ for all pairs $(u, S)$, where  $u \in U $ and $S$ is a subset of $N(u)$ with odd cardinality.
\end{enumerate}
\end{defn}

Feldman~\cite{Fel00} offers a heuristic justification for the constraints defining $\mc P$, which we reproduce here for the sake of clarity.  Since all codewords lie in $\{0,1\}^n$, the components of a vector in $\mc P$ should not be able to assume values outside of $[0,1]$, but since linear programs require convexity we permit these variables to assume intermediate values.  The other nontrivial family of constraints is best explained in the integer case, for then these constraints can be viewed as cutting planes which forbid ``bad'' configurations.  To be explicit, suppose that $\vv f \in \{0,1\}^n$ is not a codeword.  This means that there exists a row $\vv r_j$ of the parity-check matrix $H$ such that $\vv r_j \cdot \vv f = 1$, which in turn implies that there is a check node $j \in \mc J$ of the Tanner graph $T$ such that $\vv f$ assigns an odd number of ones to the variable nodes of $N(j)$ in $T$.  Letting $S$ denote this subset, we see that $\vv f$ is such that
\begin{align*}
\sum_{i \in S} f_i + \sum_{i \in N(j)\setminus S}(1- f_i)  & = \sum_{i \in S} 1 + \sum_{i \in N(j)\setminus S} 1 \\
& = |S| + |N(j) \setminus S| \\
& = |N(j)| \\
& > |N(j)| -1.
\end{align*}
Hence, the non-codeword integer vector $\vv f$ is excluded from $\mc P$.  Conversely, if $\vv f \in \{0,1\}^n$ is a codeword, then one can check that it satisfies every constraint of the fundamental polytope.  These two results are summarized in Theorem~\ref{thm:proper} below.

\begin{theorem}[\cite{Fel00}]\label{thm:proper}
Let $C$ be a code with parity-check matrix $H$ and fundamental polytope $\mc P = \mc P(H)$.  An integer vector $\vv f$ is an element of $\mc P$ if and only if $\vv f \in C$.
\end{theorem}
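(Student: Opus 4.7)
The plan is to establish both directions of the biconditional; one direction is essentially already contained in the heuristic preceding the statement, while the other needs a short counting argument driven by parity.

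For the direction ``integer vector not in $C$ implies vector not in $\mc P$,'' the plan is simply to formalize the argument already sketched: if $\vv f \in \{0,1\}^n \setminus C$, then some row of $H$ produces a check node $u$ whose neighborhood $N(u)$ contains an odd number of positions assigned the value one by $\vv f$. Taking $S$ to be exactly that set of one-valued positions in $N(u)$, the left-hand side of the corresponding parity-polytope inequality sums to $|N(u)|$, which exceeds the bound $|N(u)| - 1$, so $\vv f \notin \mc P$. For the converse, suppose $\vv f \in C$. The box constraints $0 \leq f_x \leq 1$ hold immediately since $\vv f \in \{0,1\}^n$, and the real work is to verify the parity-polytope constraints. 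For each check node $u$ and each odd-cardinality subset $S \subseteq N(u)$ of size $s$, introduce the two integer counts
\[
k = |\{x \in N(u) : f_x = 1\}|, \qquad a = |\{x \in S : f_x = 1\}|.
\]
A routine expansion rewrites the left-hand side of the constraint as $2a + |N(u)| - s - k$, so the desired inequality reduces to $2a \leq s + k - 1$.

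This is where the codeword hypothesis enters. Because $\vv f \in C$ satisfies the parity check at $u$, the count $k$ must be even; combined with $s$ odd, this forces $s \neq k$ and hence $|s - k| \geq 1$. Using the elementary bound $a \leq \min(s,k)$, one obtains
\[
2a \;\leq\; 2\min(s,k) \;=\; s + k - |s-k| \;\leq\; s + k - 1,
\]
as required. The only mild obstacle is the bookkeeping in the counting step; the conceptual content is that the parity mismatch between the even count $k$ (forced by $\vv f \in C$) and the odd size $s$ (forced by the choice of constraint) is precisely what promotes the combinatorial parity of a codeword into the strict gap needed to lie inside $\mc P$.
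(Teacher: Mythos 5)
Your proposal is correct and follows essentially the same route as the paper: the direction excluding non-codeword integer vectors reproduces the paper's own argument verbatim, and the converse—which the paper dispatches with ``one can check''—is verified correctly by your reduction of the constraint to $2a \leq s + k - 1$ and the parity observation that $k$ even and $s$ odd force $|s-k| \geq 1$. No gaps.
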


Since all the constraints of the fundamental polytope are integer vectors, Theorem~\ref{thm:rational} states that all extreme points of the fundamental polytope must be rational vectors.  If an extreme point $\vv f$ is an integer vector, Theorem~\ref{thm:proper} states that $\vv f$ must be a codeword.  As mentioned in the introduction, there are often fractional extreme points of $\mc P$ as well -- see Example~\ref{ex:pcws}.  In Section~\ref{sec:halfintegral}, we investigate the structure of the non-integral extreme points.

\begin{example}\label{ex:pcws}
Let $C$ be the cycle code whose parity-check matrix is given in Example~\ref{ex:cyclecodeword} and whose Tanner graph is depicted in Figure~\ref{fig:cyclecodeword}, and let $\mc P$ be the fundamental polytope of $C$.  Theorem~\ref{thm:proper} states that the set of integer extreme points of $\mc P$ is precisely $C$.  Since the dimension of $C$ is $k = 4$, it follows that $\mc P$ contains $2^4 = 16$ integer extreme points.  As is shown in~\cite{jointpaper2010} (and also in~\cite{axvig10}), there are exactly four non-integral extreme points of $\mc P$:
\begin{center}
\begin{tabular}{c}
$\left(\frac 12, \frac 12, \frac 12, 1, 0, 0, \frac 12, \frac 12, \frac 12\right)$\phantom{.}  \vspace{.2cm} \\ $\left(\frac 12, \frac 12, \frac 12, 0, 1, 0, \frac 12, \frac 12, \frac 12\right)$\phantom{.} \vspace{.2cm} \\ 
$\left(\frac 12, \frac 12, \frac 12, 0, 0, 1, \frac 12, \frac 12, \frac 12\right)$\phantom{.} \vspace{.2cm} \\ \hspace{-.25cm} $\left(\frac 12, \frac 12, \frac 12, 1, 1, 1, \frac 12, \frac 12, \frac 12\right)$.\\
\end{tabular}
\end{center}
\end{example}
\section{LP Pseudocodewords of Cycle Codes are Half-Integral}\label{sec:halfintegral}

This section is dedicated to proving that linear programming pseudocodewords are half-integral.  Along the way, we will see that LP pseudocodewords also have a close relationship with unions of vertex-disjoint simple cycles in the Tanner graph.  This additional structure turns out to be inseparable from half-integrality.  We make this latter fact explicit in Theorem~\ref{thm:halfintegral}, which is the main result of this paper.

In order to prove Theorem~\ref{thm:halfintegral}, we adopt the viewpoint suggested by Theorem~\ref{thm:bfs} -- namely, we view an extreme point of a polytope as a solution to a linear system of equations.  In our case, the equations forming this system come from the constraint vectors of the fundamental polytope that are satisfied with equality.  We proceed as follows.  We begin by prove a technical lemma pertaining to active constraints in Section~\ref{sec:technicallemma}.  This lemma is then applied to cycle codes\footnote{While this paper's main result concerns linear programming pseudocodewords of cycle codes, Lemma~\ref{lemma:almostunique} can be applied to any binary linear code.} in Section~\ref{sec:proof}, giving a proof of Theorem~\ref{thm:halfintegral}.

\subsection{A Technical Lemma on Active Constraints}\label{sec:technicallemma}

Let $C$ be a code defined by parity-check matrix $H$, and let $G = (X \cup U, E)$ be its Tanner graph.  We may rearrange the constraints defining the fundemental polytope to put them into the standard form presented in Section~\ref{sec:background_extreme_points}:
\begin{eqnarray}
f_x  \leq &1 & \forall x \in X \label{eqn:lessthan1}\\
f_x  \geq &0 & \forall x \in X \label{eqn:greaterthan0} \\
\sum_{x \in S} f_x + \sum_{x \in N(u)\setminus S}- f_x  \leq & |S|- 1 &\begin{tabular}{c} $\forall u \in U, S\subseteq N(u)$ \\  $\text{ such that $|S|$ is odd}$\label{eqn:checkconstraints} \end{tabular}
\end{eqnarray}
For convenience, we use the notation $(u,S)$ to represent the constraint vector of $\mc P$ corresponding to the check $u$ and the odd-sized subset $S \subseteq N(u)$.

\begin{defn}\label{defn:active}
Let $C$ be a code presented by a parity-check matrix $H$.  Let $G =( X \cup U, E)$ be its Tanner graph, and let $\mc P$ be its fundamental polytope.  For a check node $u\in U$, a subset $S \subseteq N(u)$, and a vector $\vv f \in [0,1]^n$, the \emph{cost} of $S$ at $u$ relative to $\vv f$ is given by
\[
\kappa_{u,\vv f}(S) = \sum_{x \in S} f_x + \sum_{x \in N(u)\setminus S} (1-f_x).
\]
If there exists an odd-sized subset $S\subseteq N(u)$ such that $\kappa_{u,\vv f}(S) = |N(u)| -1$, then we say that the check $u$ is \emph{active} at $\vv f$ and that the set $S$ is \emph{active} for $\vv f$ at $u$.
\end{defn}
Note that a vector $\vv f \in [0,1]^n$ is in the fundamental polytope if and only if $\kappa_{u,\vv f} (S) \leq |N(u)| - 1$ for all pairs $(u, S)$ such that $S$ is an odd-sized subset of $N(u)$.  The next lemma follows directly from Definition~\ref{defn:active} in conjunction with the definition of the fundamental polytope.

\begin{lemma}\label{lemma:activehalves}
Let $C$ be a code presented by a parity-check matrix $H$.  Let $G = ( X \cup U, E)$ be its Tanner graph, and let $\mc P$ be its fundamental polytope. If $u \in U$ is active at some $\vv f \in \mc P$ and $\alpha \in (0,1)$, then $\vv f$ can assume the value of $\alpha$ in at most $\max \{  \frac 1 \alpha ,  \frac{1}{1-\alpha}  \}$ positions of $N(u)$.  In particular, $\vv f$ can assume the value of $\frac 12$ at most twice in $N(u)$.
\end{lemma}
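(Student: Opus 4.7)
The plan is to rewrite the defining equality for an active check as a sum of nonnegative terms that equals exactly $1$, and then count how many of those summands can come from positions at which $\vv f$ takes the value $\alpha$.

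First I would unfold the definition: if $u$ is active at $\vv f$ via the odd subset $S \subseteq N(u)$, then $\kappa_{u,\vv f}(S) = |N(u)| - 1$. Subtracting this equality from the trivial identity $\sum_{x \in N(u)} 1 = |N(u)|$ and simplifying, the active condition becomes
\[
\sum_{x \in S}(1-f_x) \;+\; \sum_{x \in N(u)\setminus S} f_x \;=\; 1.
\]
Because $\vv f \in \mc P$ forces $f_x \in [0,1]$ for every $x$, each summand on the left-hand side is nonnegative; consequently the same sum restricted to any subset of $N(u)$ is still bounded above by $1$.

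Next, I would let $T = \{\,x \in N(u) : f_x = \alpha\,\}$ and set $a = |T \cap S|$ and $b = |T \setminus S|$. Each $x \in T \cap S$ contributes exactly $1-\alpha$ to the above sum and each $x \in T \setminus S$ contributes exactly $\alpha$, so restricting the identity to $T$ yields
\[
(1-\alpha)\,a \;+\; \alpha\, b \;\leq\; 1, \qquad a,\,b \in \Z_{\geq 0}.
\]
To finish, maximize $a+b$ subject to this single linear inequality. Since the objective is linear, the optimum is attained by loading all mass onto whichever variable has the smaller coefficient: if $\alpha \leq \tfrac 12$ then $\alpha \leq 1-\alpha$, so setting $a = 0$ gives $b \leq 1/\alpha$; symmetrically, if $\alpha \geq \tfrac 12$ then setting $b = 0$ gives $a \leq 1/(1-\alpha)$. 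In both cases $|T| = a+b \leq \max\{1/\alpha,\,1/(1-\alpha)\}$. Specializing to $\alpha = \tfrac 12$ collapses both bounds to $2$, which is the ``in particular'' clause of the lemma.

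I do not expect a substantive obstacle here. The only mildly delicate point is that the set $T$ need not lie entirely in either $S$ or $N(u) \setminus S$, so the contributions from both halves of the active-check sum must be tracked simultaneously; allowing $a$ and $b$ to vary independently in the final optimization handles this cleanly.
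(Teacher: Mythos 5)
Your proposal is correct and takes essentially the same route as the paper: the paper's proof bounds $\kappa_{u,\vv f}(S)$ from above by $|N(u)| - d\min\{\alpha,1-\alpha\}$ directly, which is just the un-rearranged form of your identity $\sum_{x \in S}(1-f_x) + \sum_{x \in N(u)\setminus S} f_x = 1$. The only cosmetic difference is that you track the $\alpha$-positions inside and outside $S$ separately (your $a$ and $b$) before optimizing, whereas the paper lumps them together via the single shortfall $\min\{\alpha,1-\alpha\}$ per position.
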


\begin{proof}
Let $D\subseteq X$ be the set of variable nodes in $N(u)$ that are assigned a value of $\alpha$ by $\vv f$, and let $d := |D|$.   Since $u$ is active at $\vv f$, there exists an odd-sized subset $S \subseteq N(u)$ such that $S$ is active for $\vv f$ at $u$.  We consider two cases.

If $\alpha \geq \frac 12$, then $\alpha \geq 1- \alpha$.  We therefore have
\[
|N(u)| - 1 = \kappa_{u,\vv f}(S) \leq |N(u)| - d(1-\alpha),
\]
which implies that $d \leq \frac{1}{1-\alpha} = \max \{ \frac 1 \alpha, \frac{1}{1-\alpha}\}$.

On the other hand, if $\alpha < \frac 12$, then $\alpha < 1- \alpha$.  We therefore have
\[
|N(u)| - 1 = \kappa_{u,\vv f}(S) \leq |N(u)| - d\alpha,
\]
which implies that $d \leq \frac{1}{\alpha} = \max \{ \frac 1 \alpha, \frac{1}{1-\alpha}\}$.
\end{proof}

Recalling that the symmetric difference between two sets $S_1$ and $S_2$ is $S_1 \triangle S_2 := (S_1 \cup S_2)\setminus (S_1 \cap S_2)$, we now prove Lemma~\ref{lemma:almostunique}, which is crucial in establishing Theorem~\ref{thm:halfintegral}.

\begin{lemma}\label{lemma:almostunique}
Let $C$ be a code presented by a parity-check matrix $H$.  Let $G = (X \cup U, E)$ be its Tanner graph, and let $\mc P$ be its fundamental polytope.  Fix $\vv f \in \mc P$ and $u \in U$.  If $u$ is active at $\vv f$ and $\vv f$ is not integral on $N(u)$ (i.e., there is some $x \in N(u)$ such that $0< f_x < 1$), then there are at most two active sets for $\vv f$ at $u$.

Moreover, in the case that the number of active sets for $\vv f$ at $u$ is exactly two, the symmetric difference of these two sets consists of exactly two variable nodes, and these two variable nodes are exactly the neighbors of $u$ at which $\vv f$ is not integral.
\end{lemma}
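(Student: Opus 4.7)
The plan is to reformulate the active condition as a partition-of-unity constraint and then exploit additivity when two active sets are present simultaneously. Concretely, $\kappa_{u,\vv f}(S) = |N(u)| - 1$ is equivalent to
\[
\sum_{x \in S}(1-f_x) + \sum_{x \in N(u)\setminus S} f_x = 1,
\]
since the left-hand side is exactly $|N(u)| - \kappa_{u,\vv f}(S)$. Each summand lies in $[0,1]$, which will let me read off integrality of $\vv f$ from the vanishing of individual terms.

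Given two active odd-sized sets $S_1,S_2 \subseteq N(u)$, the first step is to add their two ``defect'' equations and regroup the contributions according to the partition $N(u) = T \sqcup R$, where $T := S_1 \triangle S_2$ and $R := N(u)\setminus T$. Each $x \in T$ contributes $f_x + (1-f_x) = 1$ to the combined sum, yielding $|T|$ in total, while each $x \in R$ contributes either $2f_x$ (if $x \notin S_1\cup S_2$) or $2(1-f_x)$ (if $x \in S_1\cap S_2$), all of which are non-negative. Since the combined total equals $2$, we conclude $|T| \leq 2$; as $|S_1|$ and $|S_2|$ are both odd, $|T|$ is even, so $|T| \in \{0,2\}$. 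In the case $S_1 \neq S_2$ we have $|T| = 2$, and the $R$-contributions must all vanish, forcing $f_x = 1$ for $x \in R\cap S_1\cap S_2$ and $f_x = 0$ for $x \in R\setminus(S_1\cup S_2)$. Hence $\vv f$ is integral on $R$, and every non-integral neighbor of $u$ already lies in $T = \{x_1,x_2\}$.

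To finish the symmetric-difference claim, I would substitute this integrality on $R$ back into the single defect equation for $S_1$: the $R$-terms drop out, leaving a linear relation on $f_{x_1},f_{x_2}$ of the form $f_{x_1} = f_{x_2}$ or $f_{x_1}+f_{x_2}=1$, according to whether the pair $x_1,x_2$ splits between $S_1\setminus S_2$ and $S_2\setminus S_1$ or lies entirely on one side. In each case, if one of $f_{x_1},f_{x_2}$ were integral then the relation would force the other to be integral as well, making all of $N(u)$ integral and contradicting the standing hypothesis. Therefore both $x_1$ and $x_2$ are non-integral, identifying $T$ with the entire set of non-integral neighbors of $u$.

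Finally, to upper-bound the number of active sets by two, I would argue by contradiction: suppose $S_1,S_2,S_3$ are three distinct active sets. The preceding analysis forces each pairwise symmetric difference to equal $\{x_1,x_2\}$, the two non-integral neighbors of $u$. But then
\[
S_2 \triangle S_3 = (S_1 \triangle S_2) \triangle (S_1 \triangle S_3) = \{x_1,x_2\} \triangle \{x_1,x_2\} = \emptyset,
\]
forcing $S_2 = S_3$, a contradiction. The main obstacle is really the bookkeeping in the previous paragraph---verifying that all three possible distributions of the two elements of $T$ between $S_1\setminus S_2$ and $S_2\setminus S_1$ force both $f_{x_1}$ and $f_{x_2}$ to be non-integral---but each case reduces to the same ``integral in, integral out'' principle applied to a two-variable linear equation.
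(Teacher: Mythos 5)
Your proof is correct, and it takes a genuinely different route from the paper's. The paper characterizes the active sets directly as the odd-cardinality maximizers of $\kappa_{u,\vv f}$: it partitions $N(u)$ into $\mc L$, $\mc E$, $\mc G$ according to whether $f_x$ is less than, equal to, or greater than $\tfrac 12$, invokes Lemma~\ref{lemma:activehalves} to bound $|\mc E|\le 2$, and then runs through cases on $|\mc E|$ and the parity of $|\mc G|$ to list the possible maximizers explicitly. You instead rewrite activity as the ``defect'' identity $\sum_{x\in S}(1-f_x)+\sum_{x\in N(u)\setminus S}f_x=1$, add the identities for two distinct active sets $S_1,S_2$, and observe that the symmetric difference $T=S_1\triangle S_2$ contributes exactly $|T|$ while the complement contributes a nonnegative quantity; the parity argument then pins $|T|=2$ and forces integrality off $T$, after which a single two-variable relation ($f_{x_1}=f_{x_2}$ or $f_{x_1}+f_{x_2}=1$) shows both elements of $T$ are fractional, and the three-set contradiction via $S_2\triangle S_3=(S_1\triangle S_2)\triangle(S_1\triangle S_3)=\emptyset$ gives the bound of two. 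Your approach buys a shorter, essentially casework-free argument that does not need Lemma~\ref{lemma:activehalves} and isolates the one inequality doing all the work; the paper's approach buys more, in that it identifies \emph{which} sets are active (e.g.\ $S=\mc G$ or $\mc G\cup\mc E$ up to the choice of an element of $\mc Q$), information that is reused implicitly when the lemma is applied later. Both are valid proofs of the stated lemma.
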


\begin{proof}
Let $\vv f \in \mc P$ and $u \in U$ be given such that $\vv f^{N(u)}$ is not integral and $u$ is active at $\vv f$.  For each $x \in N(u)$, the value $f_x$ appears in the expression for $\kappa_{u,\vv f}(S)$ as $f_x$ if $x \in S$ or as $1-f_x$ if $x \in N(u) \setminus S$.  The basic idea of this proof is to address the following question:  how can $f_x$ make the largest contribution to $\kappa_{u,\vv f}(S)$?  It is clear that $f_x > 1-f_x$ if and only if $f_x > \frac 12$ and that $f_x < 1-f_x$ if and only if $f_x < \frac 12$.  From this, we see that $\kappa_{u,\vv f}(S)$ is maximized exactly when $S$ consists of all nodes $x$ such that $f_x > \frac 12$, and possibly some nodes $x$ with $f_x = \frac 12$.  In a search for sets $S$ that are active for $\vv f$ at $u$, however, we can only consider those sets $S$ whose cardinality is odd.  This parity-based issue is highly dependent on $\vv f$ itself, about which we know little.  We therefore consider several cases, each of which takes the greedy solutions that ignore parity and mashes them into solutions that respect this parity condition.

Define the following sets:
\begin{itemize}
\item $\mc L := \{ x \in N(u) \, : \, f_x < \frac 12 \}$
\item $\mc E := \{ x \in N(u) \, : \, f_x = \frac 12 \}$
\item $\mc G := \{ x \in N(u) \, : \, f_x > \frac 12 \}$
\item $\mc Q := \{ x \in N(u) \, : \,  | \frac 12 - f_x| \leq | \frac 12 - f_{x^\prime}| \text{ for all } x^\prime \in N(u) \}.$
\end{itemize}
The set $\mc Q$ contains all nodes $x$ such that $f_x$ is closest to $\frac 12$ among all $x^\prime \in N(u)$; note that $\mc Q = \mc E$ if $\mc E \neq \emptyset$.

Since $u$ is active at $\vv f$, by Lemma~\ref{lemma:activehalves} we know that $0 \leq |\mc E| \leq 2$.  Suppose first that $|\mc E| = 2$, and write $ \mc Q = \mc E =  \{q_1, q_2\}$.  If $|\mc G|$ is odd, then the sets $S_1 := \mc G$ and $S_2 := \mc G \cup {\mc Q}$ are the only two odd-sized subsets of $N(u)$ that maximize $\kappa_{u,\vv f}$ over all odd-sized subsets of $N(u)$.  Moreover, since $u$ is active at $\vv f$, the maximum value achieved by $\kappa_{u,\vv f}$ over all odd-sized subsets of $N(u)$ is precisely $|N(u)| -1$.  Having $|\mc E| = 2$ therefore forces all values on $N(u) \setminus {\mc Q}$ to be integral.  Since $S_1$ and $S_2$ satisfy $S_1 \triangle S_2 = \mc Q$ and $|S_1 \triangle S_2| = 2$, the lemma is proved in this case.  If $|\mc G|$ is even, then the sets $S_1 := \mc G \cup \{ q_1 \}$ and $S_2 := \mc G \cup \{ q_2 \}$ are the only two odd-sized subsets that maximize $\kappa_{u,\vv f}$.  As in the previous case, the maximum value achieved by $\kappa_{u,\vv f}$ over all odd-sized subsets of $N(u)$ is $|N(u)| -1$, so $|\mc E| = 2$ again forces all values on $N(u) \setminus {\mc Q}$ to be integral.  Since $S_1 \triangle S_2 = \mc Q$, we are done in this case as well.

Now suppose that $|\mc E| = 1$.  If $|\mc G|$ is odd, then $S = \mc G$ is the unique maximizer of $\kappa_{u,\vv f}$ over all odd-sized subsets of $N(u)$.  If $|\mc G|$ is even, then $S = \mc G \cup \mc E$ is the unique maximizer of $\kappa_{u,\vv f}$ over all odd-sized subsets of $N(u)$.  In either situation there is at most one active set at $u$, and so the lemma is proved in this case.

Finally, assume that $|\mc E| = 0$.  If $|\mc G|$ is odd, then $S = \mc G$ is the unique maximizer of $\kappa_{u,\vv f}$ over all odd-sized subsets of $N(u)$ and the lemma is proved.  If $|\mc G|$ is even, then the collection of all active sets for $\vv f$ at $u$ is given by $\{ S_q \; | \; q \in {\mc Q}\}$, where $S_q$ is defined as follows:
\[
S_q := \begin{cases}
\mc G \cup\{ q\} & \text{ if } q \not \in \mc G \\
\mc G \setminus \{ q \}&  \text{ if } q \in \mc G.
\end{cases}
\]
If $|\mc Q| = 1$, there is a unique set $S_q$ that maximizes $\kappa_{u,\vv f}(S)$ and we are done.  So suppose that $|\mc Q| > 1$, and let $q_1$ and $q_2$ be distinct elements of $\mc Q$.  By the definition of $\mc Q$, we have $f_{q_1} = f_{q_2}$ if $q_1, q_2 \in \mc L$ or $q_1, q_2 \in \mc G$, and $f_{q_1} = 1-f_{q_2}$ if $q_1 \in \mc L$ and $q_2 \in \mc G$ or vice-versa.  If $q_1, q_2 \in \mc L$ , then
\begin{align*}
|N(u)| - 1 & =  \kappa_{u,\vv f}(S_{q_1}) \\
&  =\sum_{x \in S_{q_1}} f_x + \sum_{x \in N(u) \setminus S{q_1}} (1 - f_x) \\
& = \sum_{x \in S_{q_1} \setminus \{q_1\}} f_x  + f_{q_1}+ (1 - f_{q_2}) \\
& \phantom{==} + \sum_{x \in N(u) \setminus \left(S{q_1} \cup \{q_2\}\right)} (1 - f_x) \\
& = \sum_{x \in S_{q_1} \setminus \{q_1\}} f_x  +1+ \sum_{x \in N(u) \setminus \left(S{q_1} \cup \{q_2\}\right)} (1 - f_x).
\end{align*}
We conclude that $\vv f$ must be integral on $N(u) \setminus \{q_1, q_2\}$;  otherwise, it would not be possible for $\kappa_{u,\vv f}(S_{q_1})$ to attain the value $|N(u)| -1$.  A similar argument yields the same conclusion in each of the other three cases.

By assumption we have that $\vv f$ must be non-integral in at least one position of $N(u)$, so either $0< f_{q_1} < 1$ or $0 < f_{q_2} < 1$.  It follows from this and from the definition of $\mc Q$ that $\mc Q = \{q_1,q_2\}$ and that $0 < f_{q_1}, f_{q_2} < 1$.  Thus, there are exactly two active sets $S_{q_1}$ and $S_{q_2}$, these sets satisfy $|S_{q_1} \triangle S_{q_2}| = 2$, and $S_{q_1} \triangle S_{q_2}$ is exactly the set of indices where $\vv f$ is not integral.
\end{proof}

\subsection{A Proof of Theorem~\ref{thm:halfintegral}} \label{sec:proof}

Let $C$ be a code with parity-check matrix $H$, Tanner graph $G = (X \cup U, E)$, and fundamental polytope $\mc P$.  Let $\vomega \in \mc P$ be an LP pseudocodeword, i.e., an extreme point of $\mc P$. By Theorem~\ref{thm:bfs}, $\vomega$ is a basic feasible solution.  Thus, the span of the constraint vectors of $\mc P$ that are active at $\vomega$ has dimension $n := |X|$.  For each node to which $\vomega$ assigns either a 0 or a 1 there is an active constraint of the form (\ref{eqn:lessthan1}) or (\ref{eqn:greaterthan0}).  This set of constraint vectors is linearly independent, so we may extend it by other active constraint vectors, which will necessarily be of the form $(u,S)$, to obtain a set of $n$ linearly independent constraint vectors that are active at $\vomega$.  Up to a permutation of rows and columns, we may write these $n$ constraint vectors in matrix form as
\[
L_\vomega := \begin{bmatrix}
I_{n-m} & \mathbf 0 \\
Q_\vomega & R_\vomega
\end{bmatrix}
\]
where the last $m$ columns represent the variable nodes in $\mc F_\vomega := \{ x \; | \; 0<\omega_x <1\}$.  Note that since the last $m$ rows of $L_\vomega$ come from constraints of the form $(u,S)$, each entry of $L_\vomega$ is either $-1, 0$, or $+1$.  Note also that since $\text{det}(L_\vomega) = \text{det}(R_\vomega)$, the fact that $L_\vomega$ is invertible implies that the square submatrix $R_\vomega$ is also invertible.

Since each row of $L_\vomega$ is a constraint vector that is active at $\vomega$, we have that $L_\vomega \vomega = \mathbf z_\vomega$, where $\vv z_\vomega$ is an integer vector determined from the right-hand sides of constraints~(\ref{eqn:lessthan1}) -- (\ref{eqn:checkconstraints}).  If we knew that $L_\vomega^{-1}$ takes entries only from $\frac12 \Z$, we could write $\vomega = L_\vomega^{-1} \vv z_\vomega$ and conclude that $\vomega \in \{0, \frac 12, 1\}^n$.  Our next goal is to show that, in the case of cycle codes, $L_\vomega^{-1}$ has entries only in $\frac 12 \Z$.  Because of the block structure of $L_\vomega$, this amounts to showing that $R_\vomega^{-1}$ has entries only in $\frac 12 \Z$.  To show this algebraic fact about $R_\vomega$, we turn to graphical methods.

\begin{defn}\label{defn:fractionalgraph}
Let $C$ be a code with parity-check matrix $H$, Tanner graph $G = (X \cup U,E)$ and fundamental polytope $\mc P$.  Let $\vomega$ be a nontrivial linear programming pseudocodeword, define $\mc F_\vomega :=\{ x \in X \; | \; 0 < \omega_x < 1\}$, and set $m = |\mc F_\vomega|$.  Let $R_\vomega$ be an $m \times m$ matrix formed as above, and define $U_{\vomega}$ to be the set of all check nodes $u \in U$ such that a constraint of the form $(u,S)$ is represented in the rows of $L_\vomega$.  Define the subgraph $G_{R_\vomega}$ of $G$ as follows: let the vertex set of $G_{R_\vomega}$ be $\mc F_\vomega \cup U_{\vomega}$, and make $x \in \mc F_\vomega $ adjacent to $ u \in U_{\vomega}$ if and only if one of the rows of $R_\vomega$ arising from $u$ has a non-zero entry in the $x^\text{th}$ position.
\end{defn}

\begin{lemma}[see also \cite{Fel00}, Theorem 7.1]\label{lemma:fgeq2}
Let $C$ be a code with parity-check matrix $H$, Tanner graph $G = (X \cup U, E)$ and fundamental polytope $\mc P$.  Fix $\vv f \in \mc P$ and $u \in U$.  If $u$ is incident to a variable node $x \in \mc F_{\vv f}:=\{ x \in X \; | \; 0 < f_x < 1\}$, then it is incident to at least two such nodes in $\mc F_{\vv f}$.
\end{lemma}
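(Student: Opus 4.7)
The plan is to argue by contradiction. Suppose some check $u \in U$ has exactly one fractional neighbor $x \in \mc F_{\vv f}$, while every other neighbor $y \in N(u) \setminus \{x\}$ satisfies $f_y \in \{0,1\}$. I will exhibit, based on the parity of the number of $1$-valued neighbors, an odd-sized subset $S \subseteq N(u)$ whose cost $\kappa_{u,\vv f}(S)$ strictly exceeds $|N(u)| - 1$, contradicting $\vv f \in \mc P$ via constraint~(\ref{eqn:checkconstraints}).

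Concretely, let $A = \{y \in N(u) \setminus \{x\} : f_y = 1\}$ and $k = |A|$, so $N(u) \setminus (\{x\} \cup A)$ consists of the $|N(u)| - 1 - k$ neighbors assigned the value $0$. First I would handle the case in which $k$ is even: take $S = \{x\} \cup A$, which has odd cardinality $k+1$. A direct computation of $\kappa_{u,\vv f}(S)$ yields
\[
\kappa_{u,\vv f}(S) = f_x + k + (|N(u)|-1-k) = f_x + |N(u)| - 1 > |N(u)| - 1,
\]
since $f_x > 0$, contradicting feasibility. Next, if $k$ is odd, take $S = A$, which has odd cardinality $k$. Then
\[
\kappa_{u,\vv f}(S) = k + (1 - f_x) + (|N(u)|-1-k) = |N(u)| - f_x > |N(u)| - 1,
\]
since $f_x < 1$, again contradicting $\vv f \in \mc P$. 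In either case we conclude that $u$ must have a second fractional neighbor.

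The main obstacle is essentially nonexistent once the setup is in place: the argument is a parity-matching computation on a single check's constraints, requiring no global information about $\vv f$ or the Tanner graph. The only care needed is to ensure that the chosen $S$ actually has odd cardinality (so that the corresponding constraint of type~(\ref{eqn:checkconstraints}) is valid), which is exactly why we split on the parity of $k$. The underlying intuition is that a lone fractional variable cannot be balanced by integral neighbors at a parity check: whichever parity of $S$ one needs, one can orient the remaining terms of $\kappa_{u,\vv f}(S)$ to each contribute their maximum value of $1$, so the single fractional slot forces a violation.
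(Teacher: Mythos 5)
Your proof is correct and follows essentially the same argument as the paper: assume a lone fractional neighbor, split on the parity of the number of $1$-valued neighbors, and exhibit an odd-sized $S$ (either the $1$-valued set together with the fractional node, or the $1$-valued set alone) whose cost exceeds $|N(u)|-1$. Your explicit computations of $\kappa_{u,\vv f}(S)$ check out and match the paper's bounds.
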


\begin{proof}
Assume that $u$ is incident to one and only one variable node $x_0 \in \mc F_{\vv f}$.  Then every variable node in $N(u) \setminus\{x_0\}$ is assigned a value of 0 or 1 by $\vv f$.  Let $\mc O_u := \{ x \in N(u) \; | \; f_x = 1\}$.  If $|\mc O_u|$ is even, then $S := \mc O_u \cup \{ x_0\}$ has odd cardinality.  Notice that $\kappa_{u,\vv f}(S) > |S| + |N(u) \setminus S| - 1 =  |N(u)| - 1$, which means that $\vv f$ does not satisfy constraint $(u, S)$ of $\mc P$ given by
\[
\kappa_{u,\vv f}(S) = \sum_{x \in S} f_x + \sum_{x \in N(u)\setminus S}(1- f_x) \leq |N(u)|- 1.
\]
Hence, $\vv f$ is not an element of $\mc P$.

If $|\mc O_u|$ is odd, set $S = \mc O_u$ and observe that $\kappa_{u,\vv f}(S) > |S| + |N(u) \setminus S| - 1 =  |N(u)| - 1$.  Again, $\vv f$ fails to satisfy constraint $(u,S)$ of $\mc P$, so $\vv f$ is not an element of $\mc P$.
\end{proof}

\begin{remark}
Lemma~\ref{lemma:fgeq2} implies that for any $\vv f \in \mc P$, $\mc F_{\vv f}$ is a \emph{stopping set}.  By definition, a stopping set is a set of variable nodes $V$ such that if a check $u$ is adjacent to some $v \in V$, then it is adjacent to at least two distinct elements of $V$.  Stopping sets are significant in the study of iterative message-passing decoding on the binary erasure channel:  the belief propagation algorithm fails to decode if and only if the set of erased bits contains a stopping set~\cite{changyan}.
\end{remark}

We now restrict to the case where $C$ is a cycle code.

\begin{lemma}\label{lemma:2regular}
Let $C$ be a cycle code with Tanner graph $G = (X \cup U, E)$ and fundamental polytope $\mc P$.  Let $L_\vomega$ and $R_\vomega$ be matrices formed as above, and let $\mc F_\vomega$ and $U_\vomega$ be the subsets of $X$ and $U$, respectively, as given in Definition~\ref{defn:fractionalgraph}.  For any nontrivial linear programming pseudocodeword $\vomega \in \mc P$ we have $U_{\vomega} \subseteq N(\mc F_\vomega)$, $|\mc F_\vomega| = |U_{\vomega}|$, and $G_{R_\vomega}$ is 2-regular.
\end{lemma}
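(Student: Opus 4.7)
The plan is to prove the three conclusions—$U_\vomega \subseteq N(\mc F_\vomega)$, $|\mc F_\vomega| = |U_\vomega|$, and 2-regularity of $G_{R_\vomega}$—by combining a double-counting argument on the edges of $G_{R_\vomega}$ with the structural information supplied by Lemmas~\ref{lemma:fgeq2} and~\ref{lemma:almostunique}. First I would observe that the block-triangular form of $L_\vomega$ gives $\det R_\vomega = \det L_\vomega \neq 0$, so $R_\vomega$ has no zero row. Since the row of $R_\vomega$ coming from a check $u$ has non-zero entries only in columns indexed by $N(u) \cap \mc F_\vomega$, the inclusion $U_\vomega \subseteq N(\mc F_\vomega)$ follows immediately.

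Next I would double-count the edges of $G_{R_\vomega}$. On one side, each $x \in \mc F_\vomega$ has degree 2 in $G$ because $C$ is a cycle code, so its degree in $G_{R_\vomega}$ is at most 2. On the other, each $u \in U_\vomega$ is adjacent to at least one node of $\mc F_\vomega$, hence by Lemma~\ref{lemma:fgeq2} to at least two, so its degree in $G_{R_\vomega}$ is at least 2. Summing degrees on each side yields $2|U_\vomega| \leq |E(G_{R_\vomega})| \leq 2|\mc F_\vomega|$, and in particular $|U_\vomega| \leq |\mc F_\vomega|$.

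The main obstacle is the reverse inequality: I would need to show that no check $u \in U_\vomega$ contributes two distinct rows $(u,S_1),(u,S_2)$ to $L_\vomega$, which then forces $|U_\vomega|$ to equal the total number of $(u,S)$-rows, namely $|\mc F_\vomega|$. Suppose toward contradiction that both rows appear. By Lemma~\ref{lemma:almostunique}, $S_1 \triangle S_2 = \{q_1,q_2\}$ consists precisely of the fractional neighbors of $u$, and every other node of $N(u)$ is a position at which $\vomega$ is integer-valued. A direct entrywise computation then shows that $(u,S_1)+(u,S_2)$ vanishes at $q_1$ and $q_2$ and is supported entirely on positions $x$ with $\omega_x \in \{0,1\}$. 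But the top $n-m$ rows of $L_\vomega$ span exactly the coordinate subspace corresponding to those integer positions, so $(u,S_1)+(u,S_2)$ is a linear combination of the top rows—a nontrivial linear dependence contradicting the invertibility of $L_\vomega$.

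Hence each $u \in U_\vomega$ contributes exactly one row, giving $|U_\vomega| = |\mc F_\vomega|$, and the two double-counting inequalities collapse to equalities. This forces every $x \in \mc F_\vomega$ and every $u \in U_\vomega$ to have degree exactly 2 in $G_{R_\vomega}$, which is exactly 2-regularity. I expect the most delicate step to be verifying the precise support of $(u,S_1)+(u,S_2)$, since this is the place where Lemma~\ref{lemma:almostunique} and the cycle-code hypothesis conspire to force the half-integral structure that Theorem~\ref{thm:halfintegral} will ultimately extract.
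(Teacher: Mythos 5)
Your proof is correct and follows essentially the same route as the paper's: the same non-singularity argument for $U_\vomega \subseteq N(\mc F_\vomega)$, the same edge double-count via Lemma~\ref{lemma:fgeq2} and the cycle-code degree bound, and the same appeal to Lemma~\ref{lemma:almostunique} to rule out a check contributing two distinct rows. The only cosmetic difference is that you derive the contradiction from a row dependence in $L_\vomega$ (the sum $(u,S_1)+(u,S_2)$ being supported on integer positions and hence lying in the span of the identity block), whereas the paper observes directly that the two corresponding rows of $R_\vomega$ are negatives of one another; the two formulations are equivalent.
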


\begin{proof}
Let $\mc Q_\vomega$ be given as above, so that
\[
L_\vomega = \begin{bmatrix}
I_{n-m} & \mathbf 0 \\
Q_\vomega & R_\vomega
\end{bmatrix}.
\]
We first show that $U_{\vomega} \subseteq N(\mc F_\vomega)$.  Let $u \in U_{\vomega}$ be given.  There must be a corresponding $S \subseteq N(u)$ such that $(u,S)$ is a row of $L_\vomega$.  Since $R_\vomega$ is non-singular, this row must involve some variable nodes in $\mc F_\vomega$.  Thus, $u \in N(\mc F_\vomega)$, so  $U_{\vomega} \subseteq N(\mc F_\vomega)$.

We now show that $|U_{\vomega}| = |\mc F_\vomega|$.  This amounts to showing that no check is represented in the rows of $[ Q_\vomega \, R_\vomega]$ more than once.  Clearly, a single constraint vector $(u,S)$ associated with $u$ cannot appear in the rows of $[ Q_\vomega \, R_\vomega]$ more than once, since otherwise $R_\vomega$ would not be invertible.

Suppose now that for some $u \in U_\vomega$ there are two distinct subsets $S_1,S_2 \subseteq N(u)$ such that the constraints $(u,S_1)$ and $(u, S_2)$ are both rows of $[ Q_\vomega \, R_\vomega]$.  Then both $S_1$ and $S_2$ are active for $\vomega$ at $u$.  In general, the vectors $(u,S_1)$ and $(u,S_2)$ are such that $(u, S_1) = -(u,S_2)$ when we restrict to those positions of $S_1 \triangle S_2$, and $(u,S_1) = (u,S_2) = \vv 0$ on the positions of $X \setminus N(u)$.  By applying Lemma~\ref{lemma:almostunique} to this check node, we see that $S_1 \triangle S_2 = \{x \in N(u) \; | \; 0 < \omega_x < 1\}$.  This means that $(u,S_1) = - (u,S_2)$ on $S_1 \triangle S_2 = N(u) \cap \mc F_\vomega$ and $(u,S_1) = (u,S_2) = \vv 0$ on $\mc F_\vomega \setminus N(u)$.  It follows that the projections of $(u,S_1)$ and $(u,S_2)$ onto the positions of $\mc F_\vomega$ are scalar multiples of one another, which contradicts in the fact that $R_\vomega$ is invertible.  We conclude that $|U_{\vomega}| = |\mc F_\vomega|$.

To prove the third and final claim of the lemma, we bound the number $e$ of edges in $G_{R_\vomega}$ in two different ways.  Since $C$ is a cycle code, each variable node has degree exactly 2 in $G$, so every variable node in $G_{R_\vomega}$ has degree at most 2.  Thus, $ e \leq 2|\mc F_\vomega|$.  On the other hand, since $U_{\vomega} \subseteq N(\mc F_\vomega)$, Lemma~\ref{lemma:fgeq2} yields $e \geq 2|U_{\vomega}|$.  Since $|\mc F_\vomega| = |U_{\vomega}|$, the result follows.
\end{proof}

\begin{prop}\label{prop:solutionsinhalf}
Let $C$ be a cycle code with fundamental polytope $\mc P$.  For any nontrivial linear programming pseudocodeword $\vomega \in \mc P$ and any corresponding matrix $R_\vomega$, the matrix $R_\vomega^{-1}$ has entries only in $\{-\frac 12, 0, +\frac 12\}$.
\end{prop}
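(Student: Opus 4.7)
\emph{Proof plan.} By Lemma~\ref{lemma:2regular}, the bipartite graph $G_{R_\vomega}$ is $2$-regular and therefore decomposes as a disjoint union of even simple cycles $C_1, C_2, \ldots, C_\ell$. Since $R_\vomega$ has a nonzero entry in position $(u, x)$ precisely when the edge $ux$ lies in $G_{R_\vomega}$, an appropriate permutation of rows and columns puts $R_\vomega$ into block-diagonal form, with the $t^\text{th}$ diagonal block $R_{C_t}$ indexed on its rows and columns, respectively, by the check nodes and variable nodes of $C_t$. Consequently $R_\vomega^{-1}$ is block-diagonal with blocks $R_{C_t}^{-1}$, so once the claim is established for a single cycle block it follows for $R_\vomega$ itself (the zero entries appearing between distinct blocks).

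Fix such a cycle block $R_C$ and suppose that $C$ has length $2k$, with vertices labelled cyclically as $u_1, x_1, u_2, x_2, \ldots, u_k, x_k, u_1$. The row of $R_C$ indexed by $u_j$ has exactly two nonzero entries, in columns $x_{j-1}$ and $x_j$ (indices read mod $k$), each equal to $+1$ or $-1$ according to whether the corresponding variable belongs to the active set for $\vomega$ at $u_j$. Thus $R_C$ is a cyclic band matrix whose Leibniz expansion for $\det(R_C)$ contains only two nonzero terms, one for each of the two perfect matchings of $C$; each contribution is a $\pm 1$ product of entries times a permutation sign. Since $R_C$ must be invertible (as a diagonal block of the invertible $R_\vomega$), these two contributions cannot cancel, forcing $\det(R_C) = \pm 2$.

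To control the cofactors, fix indices $i, j$ and consider the minor $M_{j,i}$ obtained from $R_C$ by deleting row $u_j$ and column $x_i$. Its sparsity pattern encodes the bipartite adjacency of $C - \{u_j, x_i\}$, which is either a single path (when $x_i$ is a neighbor of $u_j$ in $C$) or a disjoint union of two paths (otherwise). A direct count along the arcs of $C$ cut out by the two deletions shows that each resulting path contains equal numbers of check and variable nodes with endpoints of opposite types, so every such path admits a unique perfect matching. Hence the Leibniz expansion of $\det(M_{j,i})$ collapses to a single $\pm 1$ term, giving $\det(M_{j,i}) = \pm 1$. Cramer's rule then yields that each entry of $R_C^{-1}$ equals $\pm 1 / (\pm 2) = \pm \tfrac12$, and reassembling the cycle blocks delivers the claimed $\{-\tfrac12, 0, +\tfrac12\}$ entry bound for $R_\vomega^{-1}$.

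The principal technical obstacle is the parity accounting that guarantees $C - \{u_j, x_i\}$ always carries a (necessarily unique) perfect matching. Separate treatment is required for the cases in which $x_i$ is adjacent to $u_j$ in $C$ and in which it is not, but in both settings the counts are immediate from the fact that $C$ is an even cycle of length $2k$ with alternating bipartition.
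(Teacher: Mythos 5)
Your proposal is correct, and it shares the paper's first move --- using Lemma~\ref{lemma:2regular} to decompose $R_\vomega$ into block-diagonal form with one cyclic band block per even cycle of $G_{R_\vomega}$ --- but it finishes the computation by a genuinely different route. The paper inverts each block $D$ by solving $D\vv a = \vepsilon_p$ directly: the off-pivot equations $a_q d_{q,q} + a_{q+1}d_{q,q+1} = 0$ force all entries of $\vv a$ to agree up to sign, and the pivot equation then pins $a_p$ to $\pm\frac12$. You instead run Cramer's rule, observing that the Leibniz expansion of $\det(R_C)$ ranges over the two perfect matchings of the even cycle (so $\det(R_C)=\pm2$, the two terms being unable to cancel since $R_C$ is invertible), while each minor $M_{j,i}$ corresponds to the path or pair of paths $C - \{u_j, x_i\}$, each of which carries a unique perfect matching, giving $\det(M_{j,i})=\pm1$ and hence entries $\pm\frac12$. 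Your parity accounting for the two cases (whether $x_i$ is adjacent to $u_j$ in the cycle) is sound: each arc between the two deleted vertices is an alternating path with endpoints of opposite types, so it has equal numbers of check and variable nodes and a unique perfect matching. Both arguments establish the slightly stronger fact that within a block every entry of the inverse is exactly $\pm\frac12$ (zeros arise only between blocks). The paper's recurrence argument is more elementary in that it avoids the Leibniz formula and matching enumeration; your argument buys the explicit invariant $\det(R_C)=\pm2$, which cleanly isolates where the factor of $\frac12$ comes from and connects to the classical fact that incidence matrices of even cycles have determinant $0$ or $\pm2$ --- here the sign pattern coming from the active sets is precisely what rules out the degenerate value $0$.
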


\begin{proof}
By Definition~\ref{defn:fractionalgraph}, the non-zero entries of $R_\vomega$ give the incidence structure of $G_{R_\vomega}$.  More formally, the matrix $|R_\vomega|$ obtained by taking the coordinate-wise absolute value of each entry in $R_\vomega$ is the bipartite incidence matrix for $G_{R_\vomega}$. By Lemma~\ref{lemma:2regular}, $G_{R_\vomega}$ is a 2-regular bipartite graph; therefore, each connected component of $G_{R_\vomega}$ is a cycle of even length.  Using this connection between $R_\vomega$ and $G_{R_\vomega}$, we may assume (by permuting rows and columns) that $R_\vomega$ is a block diagonal matrix
\[
R_\vomega = \begin{bmatrix}
D_1 & \vv 0 & \dots & \vv 0 \\
\vv 0 & D_2 & \dots & \vv 0 \\
\vdots & \vdots & \ddots & \vdots \\
\vv 0 & \vv 0 & \dots & D_b \\
\end{bmatrix}
\]
where each block is square and has the form
\[
D = \begin{bmatrix}
d_{1,1} & d_{1,2} & 0 & \cdots &  0 \\
0 & d_{2,2} & d_{2,3} & \cdots & 0 \\
\vdots &  & \ddots  & \ddots &  \vdots \\
0&  & &  d_{\ell-1,\ell-1}& d_{\ell-1,\ell} \\
d_{\ell,1} & 0  & \dots & 0 &  d_{\ell,\ell} \\
\end{bmatrix},
\]
with $d_{p,q} \in \{-1,+1\}$ for all $p, q$.  To show that $R_\vomega^{-1}$ has entries only in $\{-\frac 12, 0, +\frac 12\}$, it suffices to show that each block $D$ of $R_\vomega$ is such that $D^{-1}$ takes entries only in $\{-\frac 12, 0, +\frac 12\}$.  In fact, we will show that $D^{-1}$ takes entries only in $\{-\frac 12, +\frac 12\}$.

Let $D$ be a block of $R_\vomega$.  Since $R_\vomega$ is invertible, $D$ is also invertible.  This implies the existence of a unique solution $\vv a$ to the equation $D \vv a = {\vepsilon}_p$, where ${\vepsilon}_p$ is the $p$th standard basis vector.  The equation $D \vv a = {\vepsilon}_p$ gives rise to a set of relations that must hold between the entries of $\vv a$.

In the following we take subscripts modulo $\ell$ to respect the cyclic nature of $D$.  For all $q \neq p$, we have $a_q d_{q,q} + a_{q+1}d_{q,q+1} = 0$.  Since $d_{q,q+1}, d_{q,q} \in \{-1,+1\}$ we may rearrange to get $a_q = -\frac{d_{q,q+1}}{d_{q,q}} a_{q+1}$, and so $a_q = \pm a_{q+1}$ for all $q \neq p$.  This in turn implies that all of the entries of $\vv a$ are the same up to sign.

The equation $D \vv a = {\vepsilon}_p$ also implies that $a_p d_{p,p} + a_{p+1} d_{p,p+1} = 1$.  We know from the previous paragraph that $a_{p+1}$ is either $a_p$ or $-a_p$.  Thus, we have to consider two possible equations:   $a_p (d_{p,p} + d_{p,p+1}) = 1$ or $a_p (d_{p,p} - d_{p,p+1}) = 1$.  In either case, since $d_{p,p+1}, d_{p,p} \in \{ -1, +1\}$ we have that $a_p$ is either $-\frac 12$ or $+\frac 12$.  Combining this with the previous paragraph, we see that all the entries of $\vv a$ are in $\{-\frac 12, +\frac 12\}$.

Renaming the unique solution to the equation $D\vv a = \vv {\vepsilon}_p$ to be $\vv a_p$, we see that
\[
D^{-1} = \begin{bmatrix}
| & | & \dots &  | \\
\vv a_1 &\vv a_2 & \dots & \vv a_\ell \\
| & | & \dots &  | \\
\end{bmatrix}
\]
It follows that $D^{-1}$ takes entries only in $\{-\frac 12, \frac 12\}$.
\end{proof}

The next corollary follows immediately from Proposition~\ref{prop:solutionsinhalf} and the discussion surrounding the introduction of $L_\vomega$.

\begin{corollary}\label{cor:inverseinhalves}
Let $C$ be a cycle code with fundamental polytope $\mc P$.  If $\vomega$ is an extreme point of $\mc P$ and $L_\vomega$ is formed from $\vomega$ as above, then $L_\vomega^{-1}$ has entries only in $\frac 12 \Z$.
\end{corollary}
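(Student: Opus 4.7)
The plan is to exploit the block lower-triangular structure of $L_\vomega$ together with Proposition~\ref{prop:solutionsinhalf}. Recall that
\[
L_\vomega = \begin{bmatrix} I_{n-m} & \vv 0 \\ Q_\vomega & R_\vomega \end{bmatrix},
\]
and a standard computation for block lower-triangular invertible matrices gives
\[
L_\vomega^{-1} = \begin{bmatrix} I_{n-m} & \vv 0 \\ -R_\vomega^{-1} Q_\vomega & R_\vomega^{-1} \end{bmatrix}.
\]
One checks this formula by multiplying it out and verifying it produces $I_n$; since $R_\vomega$ is invertible (as noted in the paragraph introducing $L_\vomega$), the formula is legitimate.

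First I would recall from the discussion preceding Definition~\ref{defn:fractionalgraph} that every entry of $L_\vomega$ lies in $\{-1,0,+1\}$, because its rows are either constraint vectors of the form $(u,S)$ or unit vectors corresponding to the box constraints (\ref{eqn:lessthan1})--(\ref{eqn:greaterthan0}). In particular, $Q_\vomega$ is an integer matrix with entries in $\{-1,0,+1\}$. Next, by Proposition~\ref{prop:solutionsinhalf} the block $R_\vomega^{-1}$ has entries only in $\{-\tfrac12,0,+\tfrac12\}\subseteq \tfrac12\Z$.

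I then assemble the pieces: the upper blocks $I_{n-m}$ and $\vv 0$ are obviously in $\tfrac12\Z$; the lower-right block $R_\vomega^{-1}$ is in $\tfrac12\Z$ by Proposition~\ref{prop:solutionsinhalf}; and the lower-left block $-R_\vomega^{-1} Q_\vomega$ is the product of a $\tfrac12\Z$-valued matrix with an integer matrix, hence its entries are $\Z$-linear combinations of entries in $\tfrac12\Z$, which remain in $\tfrac12\Z$ (the subring $\tfrac12\Z \subset \bbr$ is closed under addition and under multiplication by integers). Therefore every entry of $L_\vomega^{-1}$ lies in $\tfrac12\Z$, which is the claim.

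There is essentially no obstacle here — the corollary is a straightforward packaging of Proposition~\ref{prop:solutionsinhalf} with the block-triangular inversion formula, and the only thing worth checking is the closure of $\tfrac12\Z$ under the operations that arise, together with the integrality of $Q_\vomega$. This is why the authors present it as an immediate corollary rather than a standalone lemma.
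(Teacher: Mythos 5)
Your proof is correct and matches the paper's intent: the paper states the corollary "follows immediately from Proposition~\ref{prop:solutionsinhalf} and the discussion surrounding the introduction of $L_\vomega$," and your block-triangular inversion formula together with the observation that $Q_\vomega$ is integral and $\tfrac12\Z$ is closed under integer linear combinations is exactly the routine computation being left implicit. No gaps.
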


We now have the tools to prove the following proposition.

\begin{prop}\label{prop:verticesinhalves}
Let $C$ be a cycle code of length $n$ with Tanner graph $G = ( X \cup U, E)$ and fundamental polytope $\mc P$.  If $\vomega$ is a nontrivial linear programming pseudocodeword of $\mc P$, then $\vomega \in \{0,\frac 12, 1\}^n$ and the subgraph $G_{\mc F_\vomega}$ of $G$ induced by the set $\mc F_\vomega = \{ x \; | \; \omega_x = \frac 12 \}$ and its neighborhood $N(\mc F_\vomega)$ is precisely $G_{R_\vomega}$, which is 2-regular.
\end{prop}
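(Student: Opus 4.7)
The plan is to combine Corollary~\ref{cor:inverseinhalves} with the basic-feasible-solution viewpoint to obtain half-integrality, and then to pin down the structure of $G_{R_\vomega}$ via a short double-counting argument that leans on Lemmas~\ref{lemma:fgeq2} and~\ref{lemma:2regular}.

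First, since $\vomega$ is an extreme point of $\mc P$, Theorem~\ref{thm:bfs} allows me to select $n$ linearly independent active constraints at $\vomega$; the construction preceding Corollary~\ref{cor:inverseinhalves} assembles these rows into the invertible matrix $L_\vomega$, and $\vomega$ is the unique solution to $L_\vomega \vv f = \vv z_\vomega$, where $\vv z_\vomega$ is an integer vector whose entries come from the integer right-hand sides of constraints~(\ref{eqn:lessthan1})--(\ref{eqn:checkconstraints}). Corollary~\ref{cor:inverseinhalves} then yields $\vomega = L_\vomega^{-1} \vv z_\vomega \in (\tfrac{1}{2}\Z)^n$. Since $\mc P \subseteq [0,1]^n$, this forces $\omega_x \in \{0, \tfrac12, 1\}$ for every $x \in X$, proving the first claim. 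In particular $\mc F_\vomega = \{x \in X \; | \; \omega_x = \tfrac12\}$.

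Next, to identify $G_{R_\vomega}$ with the subgraph $G_{\mc F_\vomega}$ of $G$ induced by $\mc F_\vomega \cup N(\mc F_\vomega)$, I would first match the vertex sets, i.e.\ show $U_\vomega = N(\mc F_\vomega)$. Lemma~\ref{lemma:2regular} already supplies $U_\vomega \subseteq N(\mc F_\vomega)$ together with $|U_\vomega| = |\mc F_\vomega|$. For the reverse inclusion I would count edges of $G_{\mc F_\vomega}$: each $x \in \mc F_\vomega$ has degree exactly $2$ in $G$ (cycle code) and both its neighbors lie in $N(\mc F_\vomega)$, so $G_{\mc F_\vomega}$ has exactly $2|\mc F_\vomega|$ edges. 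On the other hand, Lemma~\ref{lemma:fgeq2} says that every $u \in N(\mc F_\vomega)$ contributes at least $2$ to this edge count, giving $|N(\mc F_\vomega)| \leq |\mc F_\vomega| = |U_\vomega|$, and combined with $U_\vomega \subseteq N(\mc F_\vomega)$ this forces equality.

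Finally, I would compare edges. By construction $G_{R_\vomega}$ is a subgraph of $G$ on the vertex set $\mc F_\vomega \cup U_\vomega = \mc F_\vomega \cup N(\mc F_\vomega)$, and Lemma~\ref{lemma:2regular} tells us it is $2$-regular; so it has $2|\mc F_\vomega|$ edges, matching the edge count of $G_{\mc F_\vomega}$. Since each variable node of $G_{R_\vomega}$ has at most the $2$ edges it already possesses in $G_{\mc F_\vomega}$, the two edge sets must coincide, completing the identification $G_{R_\vomega} = G_{\mc F_\vomega}$. The main subtlety I anticipate is simply being careful that $\vv z_\vomega$ has integer entries, but this is transparent from inspection of the right-hand sides in~(\ref{eqn:lessthan1})--(\ref{eqn:checkconstraints}); the rest is bookkeeping on top of the lemmas already proved.
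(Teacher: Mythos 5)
Your proposal is correct and follows essentially the same route as the paper: half-integrality via $\vomega = L_\vomega^{-1}\vv z_\vomega$ and Corollary~\ref{cor:inverseinhalves}, then the identification $G_{R_\vomega} = G_{\mc F_\vomega}$ by the same double-counting of edges using Lemmas~\ref{lemma:fgeq2} and~\ref{lemma:2regular}. The only cosmetic difference is that you first establish $U_\vomega = N(\mc F_\vomega)$ and then compare edge counts, whereas the paper runs the chain $2|U_\vomega| \leq 2|N(\mc F_\vomega)| \leq e = 2|\mc F_\vomega|$ and concludes tightness all at once; the content is identical.
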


\begin{proof}
Let $L_\vomega$, $R_\vomega$, and $U_\vomega$ be given as above.  By Corollary~\ref{cor:inverseinhalves}, $L_\vomega^{-1}$ has entries in $\frac 12 \Z$.  By the discussion following the introduction of $L_\vomega$ earlier in this section, we have that $L_\vomega \vomega = \mathbf z_\vomega$, where $\vv z_\vomega$ is an integer vector.  Thus, $\vomega = L^{-1}\mathbf z$ must have entries that come only from $\frac 12 \Z$.  Since any point in the fundamental polytope can only assume values between 0 and 1, $\vomega \in \{0,\frac 12, 1\}^n$.

Lemma~\ref{lemma:2regular} implies that $U_{\vomega} \subseteq N(\mc F_\vomega)$, $|\mc F_\vomega| = |U_{\vomega}|$, and $G_{R_\vomega}$ is 2-regular.  From $U_{\vomega} \subseteq N(\mc F_\vomega)$ and Definition~\ref{defn:fractionalgraph} we see that $G_{R_\vomega}$ is a subgraph of $G_{\mc F_\vomega}$.  Letting $e$ denote the number of edges in $G_{\mc F_\vomega}$, we have that $e = 2|\mc F_\vomega|$ since $C$ is a cycle code.  Lemma~\ref{lemma:fgeq2} implies that $2|N(\mc F_\vomega)| \leq e$.  Using the fact that $U_{\vomega} \subseteq N(\mc F_\vomega)$, we have
\[
2|U_{\vomega}| \leq 2|N(\mc F_\vomega)| \leq e = 2|\mc F_\vomega|.
\]
Since $|\mc F_\vomega| = |U_{\vomega}|$, these inequalities must be tight.  Thus $G_{\mc F_\vomega}$ has the same number of edges as $G_{R_\vomega}$ and contains $G_{R_\vomega}$ as a subgraph.  We conclude that $G_{\mc F_\vomega} = G_{R_\vomega}$.
\end{proof}

We may now prove the main result of this paper.

\begin{theorem}\label{thm:halfintegral}
Let $C$ be a cycle code with Tanner graph $G = (X \cup U, E)$ and fundamental polytope $\mc P$.  If $\vomega \in \mc P$ is a linear programming pseudocodeword of $\mc P$, then the following two conditions hold:
\begin{enumerate}[(a)]
\item $\vomega \in \{0,\frac 12, 1\}^{|X|}$, and

\item with $\mc H:= \{x \in X \, | \, \omega_x = \frac 12\}$, the subgraph $\Gamma$ of $G$ induced by $\mc H \cup N(\mc H)$ is 2-regular.  Equivalently, $\Gamma$ is a union of vertex-disjoint simple cycles $\gamma_1, \gamma_2, \dots, \gamma_\ell$.
\end{enumerate}
\end{theorem}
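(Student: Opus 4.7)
The plan is to derive Theorem~\ref{thm:halfintegral} as a short corollary of Proposition~\ref{prop:verticesinhalves}, splitting on whether $\vomega$ is a trivial or a nontrivial LP pseudocodeword. For the trivial case, Theorem~\ref{thm:proper} gives $\vomega \in C \subseteq \{0,1\}^{|X|} \subseteq \{0,\frac 12, 1\}^{|X|}$, so (a) is automatic; and because no coordinate of $\vomega$ equals $\frac 12$, the set $\mc H = \{x \in X \; | \; \omega_x = \frac 12\}$ is empty, so the induced subgraph $\Gamma$ is empty and (b) is vacuously true (an empty graph is 2-regular and is the union of no cycles).

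For a nontrivial LP pseudocodeword, Proposition~\ref{prop:verticesinhalves} immediately yields $\vomega \in \{0,\frac 12,1\}^{|X|}$, proving (a). The key observation for part (b) is that once half-integrality is known, the fractional-support set $\mc F_\vomega = \{x \in X \; | \; 0 < \omega_x < 1\}$ appearing in Definition~\ref{defn:fractionalgraph} coincides exactly with the set $\mc H$ of the theorem. Therefore the induced subgraph $\Gamma$ on $\mc H \cup N(\mc H)$ is precisely $G_{\mc F_\vomega}$, and Proposition~\ref{prop:verticesinhalves} tells us both that $G_{\mc F_\vomega} = G_{R_\vomega}$ and that this subgraph is 2-regular. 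This establishes the first half of (b).

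The remaining clause in (b) is the standard graph-theoretic fact that a finite 2-regular graph is precisely a disjoint union of simple cycles: walking along any edge in a 2-regular graph must return to its starting vertex, and distinct components of an induced subgraph are vertex-disjoint by definition. I would simply invoke this in one line. No serious obstacle remains at this stage, since all the real work has been carried out earlier: Lemma~\ref{lemma:almostunique} restricts the active sets at each non-integral check to at most two and pins down their symmetric difference, Lemma~\ref{lemma:2regular} uses edge-counting to force $G_{R_\vomega}$ to be 2-regular, and Proposition~\ref{prop:solutionsinhalf} reads off $R_\vomega^{-1} \in \frac 12 \Z^{m \times m}$ from the cyclic block form of $R_\vomega$. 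The one subtlety to flag carefully is precisely the equality $\mc F_\vomega = \mc H$, which only makes sense after (a) has been proved and which is what lets the fractional-subgraph results of the previous subsection transfer to the half-support subgraph named in the theorem.
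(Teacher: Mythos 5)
Your proposal is correct and follows essentially the same route as the paper: the paper's own proof of Theorem~\ref{thm:halfintegral} is exactly the two-case split (codeword versus nontrivial pseudocodeword) with Proposition~\ref{prop:verticesinhalves} doing all the work in the nontrivial case. Your extra remarks --- that $\mc H$ is empty for a codeword and that $\mc F_\vomega = \mc H$ once half-integrality is known --- are details the paper leaves implicit, and they are handled correctly.
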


\begin{proof}
Let $\vomega$ be a linear programming pseudocodeword of $\mc P$.  If $\vomega$ is a codeword, the two conditions in the statement of the theorem are satisfied and we are done.  If, on the other hand, $\vomega$ is a nontrivial linear programming pseudocodeword, we may apply Proposition~\ref{prop:verticesinhalves} to conclude that $\vomega$ satisfies the two conditions.
\end{proof}

\section{Conclusion}\label{sec:conclusion}

We have shown that a vector $\vomega$ can be a linear programming pseudocodeword of a cycle code only if $\vomega$ is half-integral.  Moreover, the subgraph of the Tanner graph induced by the variable nodes assigned a value of $\frac 12$ by $\vomega$, along with their neighboring check nodes, must be a union of vertex-disjoint simple cycles.  These necessary conditions, however, are not sufficient to characterize LP pseudocodewords for cycle codes.  The results from this paper are extended in~\cite{jointpaper2010}, where the author and his collaborator provide complete graphical characterizations of both linear programming pseudocodewords and \emph{minimal linear programming pseudocodewords} (see~\cite{VonKoe06}) for the class of cycle codes.  

Finally, we note that Lemma~\ref{lemma:almostunique}, which was an essential ingredient in our proof of Theorem~\ref{thm:halfintegral}, can be applied to \emph{any} binary linear code.  Testing whether this lemma can shed light on the pseudocodeword structure of additional families of codes is an object of future pursuit.

\section*{Acknowledgement}

The results from this paper are taken from the author's Ph.D. dissertation~\cite{axvig10}.  The author thanks his advisor Judy L. Walker for her support and encouragement.

%


 \bibliographystyle{plain}

\end{document}